\documentclass[reqno,10pt]{amsart}

\usepackage{amsmath,amssymb,mathrsfs,graphicx,enumitem,verbatim}

\usepackage[pdftex=true,
	bookmarks=true,
	bookmarksopen=true,
	bookmarksnumbered=true,
	pdftoolbar=true,
	pdfmenubar=true,
	pdffitwindow=false,
	pdfstartview={FitH},
	colorlinks=true,
	linkcolor=red,
	citecolor=blue,
	breaklinks=true]{hyperref}

\allowdisplaybreaks

\numberwithin{equation}{section}

\newtheorem{thm}{Theorem}[section]

\newtheorem*{thm*}{Theorem}
\newtheorem*{prop*}{Proposition}
\newtheorem*{cor*}{Corollary}
\newtheorem*{conj*}{Conjecture}

\theoremstyle{definition}

\theoremstyle{remark}
\newtheorem{rmk}[thm]{Remark}


\newcommand{\la}{\langle}
\newcommand{\ra}{\rangle}
\newcommand{\pa}{\partial}

\newcommand{\tn}{\textnormal}

\newcommand{\R}{\mathbb R}
\newcommand{\RR}{\mathbb R}
\newcommand{\C}{\mathbb C}

\newcommand{\N}{\mathbb N}

\newcommand{\Z}{\mathbb Z}

\newcommand{\ep}{\epsilon}
\newcommand{\im}{\operatorname{Im}}
\newcommand{\re}{\operatorname{Re}}

\newcommand{\supp}{\operatorname{supp}}

\newcommand{\wozero}{\setminus o}

\renewcommand{\Im}{\operatorname{Im}}

\newcommand{\Id}{\operatorname{Id}}

\newcommand{\CI}{{\mathcal C}^\infty}
\newcommand{\dCI}{\dot{\mathcal C}^\infty}
\newcommand{\CIc}{{\mathcal C}^\infty_{\mathrm c}}
\newcommand\cO{\mathcal{O}}

\newcommand{\cP}{\mathcal P}

\newcommand{\Vf}{\mathcal V}
\newcommand{\Vb}{\Vf_\bl}

\newcommand{\Psib}{\Psi_\bl}
\newcommand{\bl}{{\mathrm{b}}}

\newcommand{\Hb}{H_{\bl}}

\newcommand{\WFb}{\WF_{\bl}}
\newcommand{\WF}{\mathrm{WF}}
\newcommand{\WFh}{\mathrm{WF}_\semi}
\newcommand{\Op}{\mathrm{Op}}
\newcommand{\Oph}{\mathrm{Op}_h}
\newcommand{\Tb}{{}^{\bl}T}

\newcommand{\Sb}{{}^{\bl}S}

\newcommand{\sH}{\mathsf{H}}

\newcommand{\semi}{h}
\newcommand{\Psih}{\Psi_\semi}

\newcommand{\newconst}{\mathsf{M}}
\newcommand{\normiso}{\mathcal{H}_{\semi,\Gamma}}
\newcommand{\tnormiso}{\mathcal{\tilde H}_{\semi,\Gamma}}
\newcommand{\bnormiso}{\mathcal{H}_{\bl,\Gamma}}
\newcommand{\tbnormiso}{\mathcal{\tilde H}_{\bl,\Gamma}}


\begin{document}
\title[Non-trapping estimates]{Non-trapping estimates near normally hyperbolic trapping}

\author{Peter Hintz and Andras Vasy}
\address{Department of Mathematics, Stanford University, CA
  94305-2125, USA}
\email{phintz@math.stanford.edu}
\email{andras@math.stanford.edu}
\date{November 27, 2013. Final revision: May 12, 2014.}
\thanks{The authors were supported in part by A.V.'s National Science
  Foundation grants DMS-0801226 and DMS-1068742 and P. H.\ was
  supported in part by a Gerhard Casper Stanford Graduate Fellowship
  and the German National Academic Foundation. They are grateful to
  the referee for comments that significantly improved the
  exposition in the manuscript.}

\begin{abstract}
  In this paper we prove semiclassical resolvent estimates for operators with normally hyperbolic trapping which are lossless relative to non-trapping estimates but take place in weaker function spaces. In particular, we obtain non-trapping estimates in standard $L^2$ spaces for the resolvent sandwiched between operators which localize away from the trapped set $\Gamma$ in a rather weak sense, namely whose principal symbols vanish on $\Gamma$.
\end{abstract}

\maketitle

\section{Introduction}
The purpose of this paper is to obtain semiclassical estimates for
pseudodiffererential operators $P_h(z)$ with normally hyperbolic
trapping for $z$ real which are lossless relative to non-trapping
estimates, but take place in weaker function spaces which are defined
in a manner related to the Hamiltonian dynamics. Thus, the main result is an estimate of the form
\[
  \|u\|_{\normiso}\leq Ch^{-1}\|P_h(z)u\|_{\normiso^*},
\]
with certain function spaces $\normiso$ and $\normiso^*$, described
below; away from the trapped set these are just standard $L^2$ spaces.
As the main application of such estimates is in so-called
b-spaces, e.g.\ Kerr-de Sitter spaces, for which the estimates follow from the semiclassical ones
immediately in the presence of dilation invariance, we also prove
their counterpart in the general, non-dilation-invariant, b-setting. Extensions of special cases of these estimates play an important role in the recent global analysis of nonlinear wave equations on asymptotically Kerr-de Sitter spaces by the authors \cite{HintzVasyQuasilinearKdS}.

So at first we consider a family $P_h(z)$ of semiclassical pseudodifferential operators $P_h(z)\in\Psih(X)$ on a closed manifold $X$, depending smoothly on the parameter $z\in\C$, with normally hyperbolic trapping at the trapped set $\Gamma$, and assume that $P_h(z)$ is formally self-adjoint near $\Gamma$ for $z\in\R$; moreover, we add complex absorption $W$ in such a way that all forward and backward bicharacteristics outside $\Gamma$ either enter the elliptic set of $W$ in finite time or tend to $\Gamma$, and in at least one of the two directions they tend to the elliptic set of $W$. The bicharacteristics tending to $\Gamma$ in the forward/backward directions are forward/backward trapped; denote by $\Gamma_-$, resp. $\Gamma_+$ the forward, resp.\ backward trapped set,\footnote{In the notation of Wunsch and Zworski \cite{Wu11}, which we recall below in Section~\ref{SubsecSemiNotation}, $\Gamma_\pm$ are the backward/forward trapped sets for all (not necessarily null) bicharacteristics near the, say, zero level set of the semiclassical principal symbol $p_{\semi,z}$, and $\Gamma_\pm^\lambda$ are the corresponding sets within the $\lambda$-level set of $p_{\semi,z}$.} and assume that these are smooth codimension one submanifolds of $T^*X$ which intersect transversally in $\Gamma$, which we moreover assume to be symplectic.

In this normally hyperbolic setting, under additional hypotheses, Wunsch and Zworski \cite{Wu11} have shown polynomial semiclassical resolvent estimates
\begin{equation}\label{eq:global-estimate}
\|u\|\leq Ch^{-N}\|P_h(z) u\|,\ 0<h<h_0,
\end{equation}
in small strips $|\im z|\leq ch$, $c>0$ sufficiently small, $N>1$, and indeed for $z$ real, the loss (as compared to non-trapping estimates, which hold in many cases where there is no trapping, and which lose a power of $h^{-1}$) is merely logarithmic, i.e.\ one has
\begin{equation}\label{eq:log-global-estimate}
\|u\|\leq Ch^{-1}(\log h^{-1})\|P_h(z) u\|,\ 0<h<h_0,
\end{equation}
where $\|\cdot\|$ is the $L^2$-norm; Bony, Burq and Ramond \cite{BonyBurqRamondTrapping} showed that \eqref{eq:log-global-estimate} is indeed sharp. Dyatlov \cite{DyatlovSpectralGaps} improved these estimates in $\im z<0$ by making $c$ and $N$ explicit; in a more general setting, Nonnenmacher and Zworski \cite{NonnenmacherZworskiDecay} obtained the optimal value for $c$. 

We are concerned with improved estimates (for $z$ almost real) if one localizes $u$ and $P_h(z)u$ away from the trapping $\Gamma$ in a rather weak sense, such as by applying pseudodifferential operators with symbols vanishing at $\Gamma$. To place this into context, recall that Datchev and Vasy \cite{Da12, Datchev-Vasy:Semiclassical} have shown that under our assumptions, with $\im z=\cO(h^\infty)$, if $A,B\in\Psih(X)$ with $\WFh'(A)\cap\Gamma=\WFh'(B)\cap\Gamma=\emptyset$, $B$ elliptic on $\WFh'(A)$, then for all $M$ there is $N$ such that
\begin{equation}\label{eq:Datchev-Vasy-est}
\|Au\|\leq Ch^{-1}\|BP_h(z)u\|+C'h^M\|u\|+C''h^{-N}\|(\Id-B)P_h(z)u\|.
\end{equation}
Thus, if $P_h(z)u$ is $\cO(h^{N-1})$ at $\Gamma$ (corresponding to the $\Id-B$ term in the estimate), then on the elliptic set of $A$, hence off $\Gamma$ by appropriate choice of $A$, $u$ satisfies non-trapping semiclassical estimates:
$$
\|AP_h(z)^{-1}A v\|\leq Ch^{-1}\|v\|,
$$
with $A$ as above (take $B$ as above with $\WFh'(\Id-B)\cap\WFh'(A)=\emptyset$).
Here the $\cO(h^\infty)$ bound on $\im z$ arises from the a priori estimate,
\eqref{eq:global-estimate}, and if $1<N<3/2$, e.g.\ as is on, and
sufficiently near, the real axis,\footnote{In the latter case by
the Phragm\'en-Lindel\"of theorem.} then one can take $\im z=\cO(h^{-1+2N})$. The
purpose of this paper is to improve this result by relaxing the
conditions on $\WFh'(A)$ and $\WFh'(B)$ in
\eqref{eq:Datchev-Vasy-est}.

The main point of the theorem below is thus that its estimate degenerates only at, as opposed to near, $\Gamma$. The proof given here is closely related to the proof of Wunsch and Zworski \cite[\S4]{Wu11} but can take place in a significantly simpler, standard semiclassical pseudodifferential algebra, at the cost of being suboptimal in terms of the $L^2$-estimate, even though it is optimal (i.e.\ non-trapping) when a pseudodifferential operator with vanishing principal symbol at $\Gamma$ is applied from both sides. To set this up, let $Q_\pm\in\Psih^{-\infty}(X)$ be self-adjoint and have symbols which are defining functions of $\Gamma_\pm$ near $\Gamma$, say on a neighborhood $O$ of $\Gamma$. Let $Q_0\in\Psih^0(X)$ be a semiclassical operator with $\WFh'(Q_0)\cap\Gamma=\emptyset$ which is elliptic on $O^c$ (and thus on a neighborhood of $O^c$), with real principal symbol for convenience. One considers {\em normally isotropic} spaces at $\Gamma$, denoted $\normiso$, with squared norms given by
\[
  \|u\|_{\normiso}^2=\|Q_0 u\|^2+\|Q_+ u\|^2+\|Q_- u\|^2+h\|u\|^2;
\]
this is just the standard $L^2$-space microlocally away from $\Gamma$ as one of $Q_+$, $Q_-$ or $Q_0$ is elliptic there, and it does not
depend on the choice of $Q_0$ as on $O\setminus\Gamma$ one of $Q_+$ and $Q_-$ is elliptic at every point. The dual space relative to $L^2$ is then\footnote{One really has $Q_\pm^*$ and $Q_0^*$ in this formula, but the reality of the principal symbols assures that one may replace them by $Q_\pm$ and $Q_0$ modulo $hL^2$. See \cite[Appendix~A]{Melrose-Vasy-Wunsch:Corners} for a general discussion of the underlying functional analysis; also see Footnote~\ref{FootDual}.}
$$
\normiso^*=h^{1/2}L^2+Q_+L^2+Q_-L^2+Q_0L^2
$$
(which is $L^2$ as a space, but with this norm); $P_h(z)u$ will then be measured in $\normiso^*$.

\begin{thm}\label{thm:Gamma}
Let $P=P_h(z)$, $Q_\pm$ be as above, $\Im z=\cO(h^2)$. Then
\begin{equation}\label{eq:weaker-nontrapping-estimate}
\|Q_+ u\|+\|Q_- u\|\leq Ch^{-1}\|Pu\|_{\normiso^*}+C'h^{1/2}\|u\|,
\end{equation}
and thus by \eqref{eq:log-global-estimate},
\begin{equation}\label{eq:stronger-nontrapping-estimate}
\|u\|_{\normiso}\leq Ch^{-1}\|Pu\|_{\normiso^*}.
\end{equation}
\end{thm}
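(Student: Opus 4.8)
The plan is to run a positive-commutator argument in the standard semiclassical algebra, using the operators $Q_\pm$ as the building blocks of the commutant, exactly as in the normally hyperbolic setting of Wunsch–Zworski \cite{Wu11}. The key structural fact to exploit is that $\Gamma_\pm$ are transversally intersecting hypersurfaces in $\Sigma_{\semi,z}$ along which the Hamilton flow of $p_{\semi,z}$ is \emph{normally hyperbolic}: near $\Gamma$ one can choose the defining functions so that, within the characteristic set, $H_{p_{\semi,z}}\phi_\pm = \mp c_0\,\phi_\pm + (\text{l.o.t.})$ with $c_0>0$ the minimal expansion rate. Quantizing, this says that $\tfrac{i}{h}[P,Q_\pm^2]$ has principal symbol $\mp 2c_0\,\phi_\pm^2$ modulo terms controlled by $\phi_+^2+\phi_-^2$ and by symbols supported away from $\Gamma$ (where $Q_0$ is elliptic), plus an $O(h)$ error and the contribution $\tfrac{2}{h}(\Im z)Q_\pm^2$ which is $O(h)\|Q_\pm u\|^2$ since $\Im z=\cO(h^2)$. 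The crucial sign issue is that $[P,Q_+^2]$ and $[P,Q_-^2]$ have \emph{opposite} signs; the trick — this is the heart of the Wunsch–Zworski scheme — is to form the commutant $Q_+^2 - Q_-^2$, or better $aQ_+^2 + b\,[Q_+,Q_-]_+ \cdots$, wait: rather, one uses $Q = Q_+^2 - Q_-^2$ together with a skew term, or one pairs with the \emph{symmetrized} product. Concretely I would consider $\tfrac{i}{h}[P, Q_+ Q_- + Q_- Q_+]$ or the combination engineered so that the indefinite cross terms cancel and one is left, microlocally near $\Gamma$, with a quantity bounded below by $c(\|Q_+u\|^2+\|Q_-u\|^2)$.

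In detail, the steps would be: (1) Fix a microlocal partition: let $O$ be the neighborhood of $\Gamma$ on which $Q_\pm$ have the defining-function symbols, and note $Q_0$ is elliptic on $O^c$; away from $\Gamma$ elliptic regularity applied to $Q_0 P u$ already controls $Q_0 u$ by $h^{-1}\|Q_0 P u\| + h^{-1}\|(\text{l.o.t.})u\|$ in the standard nontrapping fashion, with the loss-free $h^{-1}$ because there is no trapping on $\supp Q_0$. (2) Near $\Gamma$: compute $\la \tfrac{i}{h}[P,G] u, u\ra$ for the cleverly chosen $G\in\Psih^{-\infty}$ built from $Q_+,Q_-$, expand using self-adjointness of $P$ (for $z\in\R$; the $\Im z$ term is the controlled $\cO(h^2)$ correction), and identify the principal term. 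The normal hyperbolicity and the transversality of $\Gamma_\pm$ are what make the principal symbol of $G$ choosable so that the commutator symbol is $\geq c(\phi_+^2+\phi_-^2)$ on a neighborhood of $\Gamma$, modulo $\phi_+^2\cdot(\text{sign-indefinite bounded})$-type errors that can be absorbed, and modulo terms supported in $O^c$ handled by step (1). (3) The left-hand side $\tfrac{i}{h}[P,G] = \tfrac{i}{h}(PG - GP)$ is paired with $u$, and on the right one gets $\tfrac{2}{h}\Im\la G u, Pu\ra$ up to the $\Im z$ term; estimate $|\la Gu, Pu\ra| \leq \|Gu\|_{\normiso}\|Pu\|_{\normiso^*}$ — here is where the spaces $\normiso,\normiso^*$ enter naturally, since $G$ is a sum of products of $Q_\pm$ (and lower-order junk), so $Gu$ is bounded by $\|u\|_{\normiso}$, and one wants the factor $h^{-1}$ out front. (4) Combine: sharp Gårding gives $\la \tfrac{i}{h}[P,G]u,u\ra \geq c(\|Q_+u\|^2+\|Q_-u\|^2) - Ch\|u\|^2 - C\|Q_0 u\|^2$, and $\|Q_0u\|^2$ is itself controlled by step (1), which gives $\|Q_0u\| \leq Ch^{-1}\|Q_0 P u\| + $ (absorbable). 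Feeding back and absorbing the $\|Q_+u\|\cdot h^{1/2}\|u\|$-type cross terms via Cauchy–Schwarz yields \eqref{eq:weaker-nontrapping-estimate}.

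For \eqref{eq:stronger-nontrapping-estimate}: the norm $\|u\|_{\normiso}^2 = \|Q_0u\|^2 + \|Q_+u\|^2 + \|Q_-u\|^2 + h\|u\|^2$, so it remains to bound $\|Q_0u\|^2$ and $h\|u\|^2$. The first is step (1). For $h\|u\|^2$: the global a priori estimate \eqref{eq:log-global-estimate} gives $\|u\| \leq Ch^{-1}(\log h^{-1})\|Pu\|$, hence $h^{1/2}\|u\| \leq Ch^{-1/2}(\log h^{-1})\|Pu\| \leq Ch^{-1/2}(\log h^{-1})\|Pu\|_{\normiso^*}$ after noting $\|Pu\| \leq h^{-1/2}\|Pu\|_{\normiso^*}$... — more carefully, since $\normiso^* \supseteq h^{1/2}L^2$ with $\|\cdot\|_{\normiso^*} \leq h^{-1/2}\|\cdot\|_{L^2}$, one has $\|Pu\| \le h^{1/2}\cdot h^{-1/2}\|Pu\| $ trivially, so instead: the term $C'h^{1/2}\|u\|$ on the right of \eqref{eq:weaker-nontrapping-estimate} is bounded, via \eqref{eq:log-global-estimate}, by $C'h^{1/2}\cdot Ch^{-1}(\log h^{-1})\|Pu\| = Ch^{-1/2}(\log h^{-1})\|Pu\|$, and since $\|Pu\|_{L^2} \le \|Pu\|_{\normiso^*}$ only up to losing a power — the clean route is to observe that \eqref{eq:weaker-nontrapping-estimate} already controls $\|Q_\pm u\|$, add $\|Q_0 u\|$ from step (1), and then bound the remaining $h\|u\|^2$ term in $\|u\|_{\normiso}^2$ directly by \eqref{eq:log-global-estimate}: $h\|u\|^2 \le Ch^{-1}(\log h^{-1})^2\|Pu\|^2$, and absorb the $\log$ into the improvement, i.e. $h^{1/2}\|u\| = o(h^{-1}\|Pu\|_{\normiso^*})$ because the global estimate has only a $\log$ loss while we need an $h^{-1}$ bound — the $h^{1/2}$ prefactor beats the $\log h^{-1}$. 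This last bookkeeping is routine.

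The main obstacle, I expect, is step (2): arranging the commutant $G$ so that its commutator with $P$ is \emph{positive} near $\Gamma$ despite the opposite signs of $H_{p}\phi_+$ and $H_p\phi_-$ — this is precisely the subtlety that makes normally hyperbolic trapping hard, and it is why the Wunsch–Zworski argument is delicate. The saving grace here (and the reason this paper's proof is simpler than theirs) is that we are \emph{not} aiming for the sharp $L^2 \to L^2$ bound: we only need the estimate for $Q_\pm u$, i.e. weighted by symbols vanishing on $\Gamma$, so the commutant can be built directly from $Q_+, Q_-$ in the ordinary semiclassical calculus without passing to a second-microlocal or exotic algebra adapted to $\Gamma$. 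Concretely, one takes $G$ to be (a self-adjoint version of) $\chi(Q_+^2 - Q_-^2)$ or the symmetrized $Q_+Q_- + Q_-Q_+$ with a cutoff $\chi$ to $O$, computes that $\tfrac{i}{h}[P,G]$ has the right positivity after using $H_p\phi_\pm = \mp c_0 \phi_\pm$, and the cross terms $\phi_+\phi_-$ that appear are handled because on $\Sigma_{\semi,z}$ they are dominated by $\phi_+^2 + \phi_-^2$ near the transversal intersection. Verifying the sign and the error control rigorously via sharp Gårding on a precisely chosen neighborhood is the technical crux.
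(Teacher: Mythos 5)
Your overall strategy matches the paper's: a positive-commutator argument with a commutant built from the defining functions $\phi_\pm$, estimate of the $\la APu,Au\ra$ pairing via the $\normiso$/$\normiso^*$ duality, and then the upgrade to \eqref{eq:stronger-nontrapping-estimate} by noting that the $h^{1/2}$ prefactor on $\|u\|$ beats the $\log h^{-1}$ loss in \eqref{eq:log-global-estimate}, indeed beats any $h^{-s}$ with $s<1/2$. That part is fine.

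Where the proposal has a real gap is in the commutant and, crucially, in the error-term mechanism. The symmetrized candidate $Q_+Q_-+Q_-Q_+$ has principal symbol proportional to $\phi_+\phi_-$, which is sign-indefinite, so $\frac{i}{h}[P,G]$ will not be dominated by squares near $\Gamma$; that option is dead. Your other candidate, $\chi(Q_+^2-Q_-^2)$, is close to what the paper uses, but the paper's commutant is the \emph{product} of two cutoffs, $a=\chi_0(\phi_+^2-\phi_-^2+\kappa)\,\chi(\phi_+^2)\,\psi(p)$, and both factors matter: the outer $\chi_0$ (with $\chi_0'\ge 0$) is differentiated into terms of the correct, negative, sign thanks to $\sH_p\phi_\pm^2=\mp 2c_\pm^2\phi_\pm^2$; but the second cutoff $\chi(\phi_+^2)$ (with $\chi'\chi=-\chi_1^2$) is what keeps the support near $\Gamma$, and its derivative produces the term $e_-^2$ of the \emph{wrong} sign. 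The whole device, which your proposal does not contain, is that $\supp e_-$ is disjoint from $\Gamma_+$ (because $\chi_1$ lives where $\phi_+^2$ is bounded away from $0$), hence backward non-trapped, so $\|E_-u\|$ is controlled by $Pu$ off $\Gamma_+$ via propagation of singularities. Your proposed control -- ``the cross terms $\phi_+\phi_-$ are dominated by $\phi_+^2+\phi_-^2$'' -- is not the mechanism, and without localizing the bad term away from $\Gamma_+$ there is no way to close the estimate. Relatedly, your ``step (1)'' treats the region off $\Gamma$ as elliptic, but on $\Sigma_{\semi,z}$ it is merely non-trapped, so the control there is via propagation to the absorbing region, not ellipticity. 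Finally, note that the main positivity in the paper is exact (the commutator symbol is written as $-c_+^2a_+^2-c_-^2a_-^2+e_-^2$, quantized as $-(C_+A_+)^*C_+A_+-(C_-A_-)^*C_-A_-+E_-^*E_-+hF$), so sharp G\aa rding is not needed for the leading term, only for the comparison $\ep\|Q_+Au\|^2\lesssim\|C_+A_+u\|^2+\cO(h)\|u\|^2$ that lets you absorb $\ep\|Au\|_{\normiso}^2$.
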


In fact, we also obtain a direct proof of \eqref{eq:stronger-nontrapping-estimate} without using \eqref{eq:log-global-estimate} at the end of Section~\ref{SecSemiclassical}. Note that this theorem in particular implies the main result of \cite{Da12} in this setting, in that the estimates are of the same kind, except that in \cite{Da12} $Pu$ is assumed to be microlocalized away from $\Gamma$, and $u$ is estimated microlocally away from $\Gamma$.

The aforementioned b-estimates will be proved in Section~\ref{sec:b}, see Theorem~\ref{thm:b-normiso-propagation}.

\section{Semiclassical resolvent estimates on the real line}
\label{SecSemiclassical}

\subsection{Notation and definitions}
\label{SubsecSemiNotation}

We will review some definitions of semiclassical analysis, partially in order to fix our notation. For a general reference, see Zworski \cite{ZworskiSemiclassical}.

Let $X$ be a compact $n$-dimensional manifold without boundary, and fix a smooth density on $X$.

\begin{itemize}[leftmargin=0.8cm]
\item For $u\in L^2(X)$, denote by $\|u\|$ its $L^2(X)$ norm; moreover, denote by $\la\cdot,\cdot\ra$ the (sesquilinear) inner product on $L^2(X)$.
\item A family of functions $u=(u_h)_{h\in(0,1)}$ on $X$ is \emph{polynomially bounded} if $\|u\|\leq Ch^{-N}$ for some $N$. If $k\in\R$, we say that $u\in\cO(h^k)$ if $\|u\|\leq C_k h^k$, and $u\in\cO(h^\infty)$ if $\|u\|\leq C_N h^N$ for every $N$.
\item For $a=(a_h)_{h\in(0,1)}\in\CI(T^*X)$, we say $a\in h^k S^m(T^*X)$ if $a$ satisfies
  \[
    |\pa_z^\alpha\pa_\zeta^\beta a_h(z,\zeta)|\leq C_{\alpha\beta}h^k\la\zeta\ra^{m-|\beta|}
  \]
  for all multiindices $\alpha,\beta$ and all $N\in\N$ in any coordinate chart, where the $z$ are coordinates in the base and $\zeta$ coordinates in the fiber. We define the \emph{semiclassical quantization} $\Oph(a)$ of $a$ by
  \[
    \Oph(a)u(z)=(2\pi h)^{-n}\int e^{iz\zeta/h}a(z,\zeta)\hat u(\zeta/h)\,d\zeta
  \]
  for $u\in\CIc(X)$ supported in a chart and for general $u\in\CIc(X)$
  by using a partition of unity. We write $\Oph(a)\in
  h^k\Psih^m(X)$. The quantization depends on the choice of partition
  of unity, but the resulting class of operators does not, modulo
  operators that have Schwartz kernel in $h^\infty\CI(X^2)$. We say
  that $a$ is a \emph{symbol} of $\Oph(a)$. The equivalence class of
  $a$ in $h^k S^m(T^*X)/h^{k-1} S^{m-1}(T^*X)$ is invariantly defined
  and is called the \emph{principal symbol} of $\Oph(a)$. All operators below except $Q_0\in\Psih^0(X)$ will in fact have compact microsupport in the sense that they are quantizations of symbols $a\in h^k S^m(T^*X)$ satisfying in addition for all $N$
  \[
    |\pa_z^\alpha\pa_\zeta^\beta a_h(z,\zeta)|\leq C_N h^N\la\zeta\ra^{-N}\tn{ for all multiindices }\alpha,\beta
  \]
  for $\zeta$ outside of a compact subset of $T^*X$. We denote the class of such symbols by $h^k S(T^*X)$ and the corresponding class of operators by $h^k\Psih(X)$.
\item If $A,B\in\Psih(X)$, then $[A,B]\in h\Psih(X)$, and its principal symbol is $\frac{h}{i}\sH_a b$, where we define the \emph{Hamilton vector field} in a coordinate chart by
  \[
    \sH_a = (\pa_\zeta a)\pa_z - (\pa_z a)\pa_\zeta.
  \]
\item By a \emph{bicharacteristic} of $A$ we mean an integral curve of the Hamilton vector field of the principal symbol of $A$. We denote the integral curve passing through the point $\rho\in T^*X$ by $\gamma_\rho$, i.e.\ $\gamma_\rho(0)=\rho$ and $\gamma_\rho'(s)=\sH_a(\gamma_\rho(s))$. We shall also write $\phi^s(\rho):=\gamma_\rho(s)$ for the bicharacteristic flow.
\item For a polynomially bounded family $(u_h)_{h\in(0,1)}$ and $k\in\R\cup\{\infty\}$, we say that $u=\cO(h^k)$ at a point $\rho\in T^*X$ if there exists $a\in S(T^*X)$ with $a(\rho)\neq 0$ such that $\|\Oph(a)u\|=\cO(h^k)$. We define the \emph{semiclassical wave front set} $\WFh(u)$ of $u$ as the complement of the set of all $\rho\in T^*X$ at which $u=\cO(h^\infty)$.
\item The \emph{microsupport} of $A=\Oph(a)\in h^k\Psih(X)$, denoted $\WFh'(A)$, is the complement of the set of all $\rho\in T^*X$ so that $|\pa^\alpha a|=\cO(h^\infty)$ near $\rho$ for every multiindex $\alpha$, in any (and therefore in every) coordinate chart.
\item For $A\in h^k\Psih(X)$ with principal symbol $a\in h^kS(T^*X)$, we say that $A$ is \emph{elliptic} at $\rho\in T^*X$ if there is a constant $C>0$ such that $|a(\rho')|\geq Ch^k$ for $\rho'$ near $\rho$ and $h$ sufficiently small. For a subset $E\Subset T^*X$, we say that $A$ is elliptic on $E$ if $A$ is elliptic at each point of $E$. If $A\in h^k\Psih(X)$ is elliptic on $E\Subset T^*X$ and $Au=f$ with $u,f$ polynomially bounded and $f$ is $\cO(1)$ on $E$, then \emph{microlocal elliptic regularity} states that $u$ is $\cO(h^{-k})$ on $E$.
\item The \emph{semiclassical characteristic set} of the semiclassical operator $A\in\Psih(X)$ with principal symbol $a$ is defined by $\Sigma_\semi=\{\rho\in T^*X\colon a(\rho)=0\}$.
\item If $A\in\Psih(X)$ has a principal symbol with non-positive imaginary part, $u,f$ are polynomially bounded, $Au=f$, and $u=\cO(h^k)$ at $\rho$, $f=\cO(h^{k+1})$ on $\gamma_\rho([0,T])$ for some $T>0$, then the \emph{propagation of singularities} states that $u=\cO(h^k)$ at $\gamma_\rho(T)$.
\item Let $P\in\Psih(X)$ be a semiclassical operator. Let $U\subset X$ denote an open subset so that the cotangent bundle over $U$ contains what will be the trapped set, and place complex absorbing potentials in a neighborhood of $U^c$.\footnote{See \cite{Wu11,Va12} for details; the point here is that the relevant part of our analysis takes place microlocally near the trapped set, and the complex absorbing potentials allow us to `cut off' the bicharacteristic flow in a neighborhood of the trapped set.} We recall the notion of \emph{normal hyperbolicity} from \cite{Wu11}: Define the \emph{backward}, resp.\ \emph{forward, trapped set} $\Gamma_+$, resp.\ $\Gamma_-$, by
  \[
    \Gamma_\pm=\{\rho\in T^*X\colon \gamma_\rho(s)\notin T^*_{U^c} X\tn{ for all }\mp s\geq 0\}.
  \]
  Let $\Gamma_\pm^\lambda=\Gamma_\pm\cap p^{-1}(\lambda)$ be the backward/forward trapped set within the energy surface $p^{-1}(\lambda)$, and define the \emph{trapped set} $\Gamma_\lambda:=\Gamma_+^\lambda\cap\Gamma_-^\lambda$. We say that $P$ is \emph{normally hyperbolically trapping} if:
  \begin{enumerate}
    \item There exists $\delta>0$ such that $dp\neq 0$ on $p^{-1}(\lambda)$ for $|\lambda|<\delta$;
	\item $\Gamma_\pm\cap p^{-1}(-\delta,\delta)$ are smooth codimension one submanifolds intersecting transversally at $\Gamma\cap p^{-1}(-\delta,\delta)$, and $\Gamma\cap p^{-1}(-\delta,\delta)$ is symplectic;
	\item the flow is hyperbolic in the normal directions to $\Gamma_\lambda$ within the energy surface: There exist subbundles $E^\pm_\lambda$ of $T_{\Gamma_\lambda}(\Gamma_\pm^\lambda)$ such that $T_{\Gamma_\lambda}\Gamma_\pm^\lambda=T\Gamma_\lambda\oplus E^\pm_\lambda$, where $d\phi^s\colon E^\pm_\lambda\to E^\pm_\lambda$, and there exists $\theta>0$ such that for all $|\lambda|<\delta$
	\[
	  \|d\phi^s(v)\|\leq Ce^{-\theta|t|}\|v\|\tn{ for all }v\in E^\mp_\lambda, \pm t\geq 0.
	\]
  \end{enumerate}
\end{itemize}

\subsection{Details on the setup and proof of the main result}
\label{SubsecSemiResult}

Let $p=p_{\semi,z}$ be the semiclassical principal symbol of $P=P_h(z)$. Recall from the work of Wunsch and Zworski \cite[Lemma~4.1]{Wu11}, with a corrected argument in \cite{Wu11c}, that for defining functions $\phi_\pm$ of $\Gamma_\pm$ (near $\Gamma$, namely in a neighborhood $O$ of $\Gamma$) one can take $\phi_\pm$ with
\[
\sH_p\phi_\pm=\mp c_\pm^2\phi_\pm
\]
with $c_\pm>0$ near $\Gamma$, and with\footnote{These defining functions exist globally when $\Gamma_\pm$ is orientable; but even if $\Gamma_\pm$ is not such, the square is globally defined. There is only a minor change required below if $\phi_\pm$ are not well defined; see Footnote~\ref{footnote:non-orientable}.}
\[
\{\phi_+,\phi_-\}>0
\]
near $\Gamma$. {\em This is the only relevant feature of normal hyperbolicity for this paper; thus these identities and estimates could be taken as its definition for our purposes.} By shrinking $O$ if necessary we may assume that this Poisson bracket as well as $c_\pm$ have positive lower bounds on $O$. Then notice that
\[
\sH_p\phi_+^2=-2 c_+^2\phi_+^2,\qquad \sH_p\phi_-^2=2c_-^2\phi_-^2.
\]
As indicated in the introduction, we consider {\em normally isotropic} spaces at $\Gamma$, denoted $\normiso$, with squared norms given by
\[
  \|u\|_{\normiso}^2=\|Q_0 u\|^2+\|Q_+ u\|^2+\|Q_- u\|^2+h\|u\|^2;
\]
we can take $Q_\pm$ with principal symbol $\phi_\pm$, while $Q_0$ is elliptic on $O^c$ with real principal symbol. This is just the standard $L^2$-space microlocally away from $\Gamma$ as one of $Q_+$, $Q_-$ and $Q_0$ is elliptic there, and it does not depend on the choice of $Q_0$ as on $O\setminus\Gamma$ one of $Q_+$ and $Q_-$ is elliptic at every point. Notice that in fact
\[
(Q_+-iQ_-)^*(Q_+-iQ_-)=Q_+^*Q_+ + Q_-^*Q_- - i[Q_+,Q_-]
\]
and if $B\in\Psih(X)$ with $\WFh'(B)\subset O$ then
$$
h\|Bv\|^2\leq C\re\la i[Q_+,Q_-]Bv,Bv\ra+Ch^{N'}\|v\|^2,
$$
$C>0$, in view of $\{\phi_+,\phi_-\}>0$ on $O$, so
$$
Q_+^*Q_++Q_-^*Q_-=\frac{1}{2}(Q_+^*Q_++Q_-^*Q_-+(Q_+-iQ_-)^*(Q_+-iQ_-)+i[Q_+,Q_-])
$$
shows that, for $h>0$ small, the norm on $\normiso$ is equivalent to just the norm
$$
\|u\|^2_{\normiso,2}=\|Q_0 u\|^2+\|Q_+ u\|^2+\|Q_- u\|^2.
$$
As mentioned in the introduction, the dual space relative to $L^2$ is then
$$
\normiso^*=h^{1/2}L^2+Q_+L^2+Q_-L^2+Q_0L^2.
$$
Then $\Psih(X)$ acts on $\normiso$, and thus on $\normiso^*$, for
$B\in\Psih(X)$ preserves $h^{-1/2}L^2$ and gives
$$
\|Q_+Bu\|\leq \|BQ_+u\|+\|[Q_+,B]u\|\leq C\|Q_+u\|+h\|u\|_{L^2},
$$
with a similar result for $Q_-$ and $Q_0$. We remark that the notation $\normiso$ is justified as the space depends only on $\Gamma$, not on the particular defining functions $\phi_\pm$ as any other defining functions would change $Q_\pm$ by an elliptic factor modulo an element of $h\Psih(X)$, whose contribution to the squared norm can be absorbed into $Ch^2\|u\|^2_{L^2}$, and thus dropped altogether (for $h$ small) in view of the
equivalence of the two norms discussed above.

We are now ready to prove Theorem~\ref{thm:Gamma}. We remark that the microlocal version of the two estimates of the theorem is that given any neighborhood $O'$ of $\Gamma$ with closure in $O$, there exist $B_0\in\Psih(X)$ elliptic at $\Gamma$, $B_1,B_2\in\Psih(X)$ with $\WFh'(B_2)\cap\Gamma_+=\emptyset$, $\WFh'(B_j)\subset O'$ for $j=0,1,2$ such that
\begin{equation}\label{eq:mweaker-nontrapping-estimate}
\|B_0Q_+ u\|+\|B_0Q_- u\|\leq h^{-1}\|B_1Pu\|_{\normiso^*}+\|B_2u\|_{L^2}+C'h^{1/2}\|u\|_{L^2},
\end{equation}
respectively
\begin{equation}\label{eq:mstronger-nontrapping-estimate}
\|B_0u\|_{\normiso}\leq h^{-1}\|B_1Pu\|_{\normiso^*}+\|B_2 u\|_{L^2}+C'h\|u\|_{L^2};
\end{equation}
see \eqref{eq:microloc-iso-est}.
The theorem is then proved by controlling the $B_2 u$ term using the
backward non-trapped nature of $\Gamma_-\setminus\Gamma$.

\begin{proof}[Proof of Theorem~\ref{thm:Gamma}]
We first prove \eqref{eq:weaker-nontrapping-estimate}, which proves
\eqref{eq:stronger-nontrapping-estimate} by
\eqref{eq:log-global-estimate}. In fact, one can also give a direct proof of \eqref{eq:stronger-nontrapping-estimate} without using \eqref{eq:log-global-estimate}; see the discussion following this proof.

Let $\chi_0(t)=e^{-1/t}$ for $t>0$,
$\chi_0(t)=0$ for $t\leq 0$, $\chi\in\CI_c([0,\infty))$ be identically $1$ near $0$ with $\chi'\leq
0$, and indeed with $\chi'\chi=-\chi_1^2$, $\chi_1\geq 0$,
$\chi_1\in\CI_c([0,\infty))$,
and let $\psi\in\CI_c(\RR)$ be identically $1$ near $0$.
Let
$$
a=\chi_0(\phi_+^2-\phi_-^2+\kappa)\chi(\phi_+^2)\psi(p),
$$
$\kappa>0$ small.
Notice that on $\supp a$, if $\chi$ is supported in $[0,R]$,
$$
\phi_+^2\leq R,\ \phi_-^2\leq\phi_+^2+\kappa=R+\kappa,
$$
so $a$ is localized near $\Gamma$ if $R$ and $\kappa$ are taken
sufficiently small.
Then
\begin{equation*}\begin{split}
\frac{1}{4}\sH_p(a^2)=&-(c_+^2\phi_+^2+c_-^2\phi_-^2)(\chi_0\chi_0')(\phi_+^2-\phi_-^2+\kappa)
\chi(\phi_+^2)^2\psi(p)^2\\
&- c_+^2\phi_+^2(\chi'\chi)(\phi_+^2)\chi_0(\phi_+^2-\phi_-^2+\kappa)^2\psi(p)^2.
\end{split}\end{equation*}
Now $\chi_0'\geq 0$, so the two terms have opposite
signs. Let\footnote{If $\phi_\pm$ is not defined globally, $a_\pm$ are\label{footnote:non-orientable}
not defined as stated. (The term $e_-^2$ need not have a sign, so this
issue does not arise for it; see the Weyl quantization argument
below.) However, $a_\pm$ need not be real below, so as long as one can
choose $\psi_\pm$ complex valued with $|\psi_\pm|^2=\phi_\pm^2$,
replacing the first factor of $\phi_\pm$ with $\psi_\pm$ in the
definition of $a_\pm$ allows one to complete the argument in general.}
$$ 
a_\pm=\phi_\pm \sqrt{(\chi_0\chi_0') (\phi_+^2-\phi_-^2+\kappa)}\chi(\phi_+^2)\psi(p),
$$ 
and
$$
e_-=c_+\phi_+\chi_1(\phi_+^2) \chi_0(\phi_+^2-\phi_-^2+\kappa)\psi(p);
$$
then
\begin{equation}
\label{eq:Gamma-commutator}
\frac{1}{4}\sH_p(a^2)=-c_+^2a_+^2-c_-^2a_-^2+e_-^2.
\end{equation}
Here
\begin{equation*}\begin{split}
&\supp e_-\subset\supp a,\\
&\supp e_-\cap\Gamma_+=\emptyset,
\end{split}\end{equation*}
with the last statement following from $\phi_+^2$ taking values away
from $0$ on $\supp\chi_1$; see Figure~\ref{FigTrapping}.

\begin{figure}[!ht]
  \centering
  \includegraphics{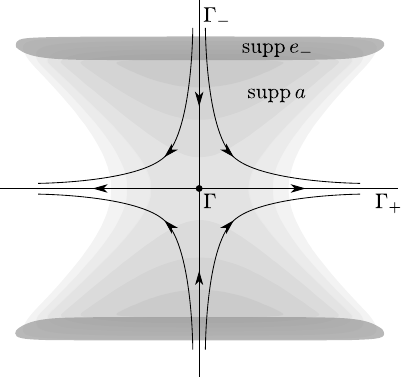}
  \caption{Supports of the commutant $a$ and the error term $e_-$ in the positive commutator argument of the non-trapping estimate near the trapped set $\Gamma$, Theorem~\ref{thm:Gamma}. The support of $a$ is indicated in light gray; on $\supp a\setminus\supp e_-$, darker colors correspond to larger values of $a$. Also shown are the forward, resp.\ backward, trapped set $\Gamma_-$, resp. $\Gamma_+$, and the bicharacteristic flow nearby. The figure already suggests that $\sH_p(a^2)$ is non-positive away from $\supp e_-$, and actually negative away from $\supp e_-\cup\Gamma$; see equation~\eqref{eq:Gamma-commutator}.}
  \label{FigTrapping}
\end{figure}

One then takes $A\in\Psih(X)$ with principal symbol $a$, and with
$\WFh'(A)\subset\supp a$,
$A_\pm\in\Psih(X)$ with principal symbols of $a_\pm$, and with
$\WFh'(A_\pm)\subset\supp a_\pm$, $C_\pm$ have symbol $c_\pm$ and with
$\WFh'(C_\pm)\subset\supp c_\pm$; one
similarly lets $E_-\in\Psih(X)$ have principal symbol $e_-$,
and wave front set in the support of $e_-$.
This gives that
\begin{equation}\label{eq:pos-commutator-trap-weak}
\frac{i}{4 h}[P,A^*A]=-(C_+A_+)^*(C_+A_+)-(C_-A_-)^*(C_-A_-)+E_-^*E_-+hF,
\end{equation} 
for some $F\in\Psih(X)$ with
$$
\WFh'(F)\subset\supp a.
$$ 
Thus
$$
\frac{i}{4h}\la[P,A^*A]u,u\ra=-\|C_+A_+ u\|^2 -\|C_-A_- u\|^2+\|E_- u\|^2+h\la Fu,u\ra.
$$
Expanding the left hand side gives
\begin{equation*}\begin{split}
&\la PA^*Au,u\ra-\la A^*AP u,u\ra\\
&\qquad=\la Au,A Pu\ra-\la AP u,Au\ra+\la (P-P^*)A^*Au,u\ra.
\end{split}\end{equation*}
As we are assuming that $P-P^*$ is $\cO(h^2)$ near $\Gamma$, we may also assume that this holds on $\supp a$, thus the last term is $\cO(h^2)\|u\|^2$. Thus,
\begin{equation}\label{eq:pos-comm-est-trap}
\|C_+A_+ u\|^2+\|C_-A_- u\|^2\leq \|E_- u\|^2+h^{-1}|\la 
APu,Au\ra|+C_1 h\|u\|^2.
\end{equation} 
Now, by the duality of $\normiso$ and $\normiso^*$ relative to the $L^2$ inner product,
\begin{equation}
\label{EqPairing}
|\la APu,Au\ra|\leq \|APu\|_{\normiso^*}\|Au\|_{\normiso}\leq \frac{h\ep}{2}\|Au\|^2_{\normiso}+\frac{1}{2h\ep}\|APu\|_{\normiso^*}^2.
\end{equation}
Further, for $\ep>0$ small, $\ep\|Q_+Au\|^2$ can be estimated in terms
of $\|C_+A_+u\|^2+\cO(h)\|u\|^2$, as can be seen by comparing the
principal symbols, in particular using the ellipticity of $C_+$ on
$\supp a$. One can thus absorb $\frac{\ep}{2}\|Au\|^2_{\normiso}$
into the left hand side of \eqref{eq:pos-comm-est-trap}.
This shows
$$
\|C_+A_+ u\|^2+\|C_-A_- u\|^2\leq C\|E_- u\|^2+Ch^{-2}\|APu\|_{\normiso^*}^2+Ch\|u\|^2.
$$
Now, since the region $\supp e_-$ is disjoint from $\Gamma_+$, it is backward non-trapped, and thus the standard propagation of singularities with complex absorption (see e.g.\ \cite[Lemma~5.1]{Datchev-Vasy:Semiclassical}) implies that $E_-u$ is controlled by $Pu$ microlocalized off $\Gamma_+$, hence by $Q_+Pu$, modulo higher order (in $h$) terms in $Pu$. This proves the first part of Theorem~\ref{thm:Gamma} since $A_\pm$ is an elliptic multiple of $Q_\pm$ microlocally near $\Gamma$. Thus, if we have a bound $\|u\|\leq C'h^{-1-s}\|Pu\|_{L^2}$, $0<s<1/2$, and thus $h\|u\|^2\leq C'h^{-1-2s}\|Pu\|_{L^2}^2\leq C''h^{-1-2s}\|Pu\|_{\normiso^*}^2$, this implies a non-trapping estimate:
$$
\|u\|_{\normiso}\leq Ch^{-1}\|Pu\|_{\normiso^*}.
$$
This completes the proof of Theorem~\ref{thm:Gamma}.
\end{proof}

In fact, as mentioned earlier, a slight change of point of view proves
Theorem~\ref{thm:Gamma} directly. To see this, we use the Weyl
quantization\footnote{The Weyl quantization is actually irrelevant. It\label{footnote:Weyl}
  is straightforward to see that if $A\in\Psih(X)$ and if the
  principal symbol of $A$ is real then the real part
  of the subprincipal symbol is defined independently of choices. This
  is all that is needed for the argument below.}
when choosing $a,a_\pm,c_\pm,e_-$; since we are on a
manifold, this requires identifying functions with half-densities via
trivialization of the half-density bundle by the Riemannian
metric; this identification preserves self-adjointness. We also write
$P_{\semi,z}$ as the Weyl quantization of $p_0+hp_1$ with $p_0$, $p_1$
real modulo $\cO(h^2)$. Then the principal symbol calculation above
holds with $p_0$ in place of $p$, and with $p_1$ included it yields
additional terms
\begin{equation*}\begin{split}
\frac{1}{4}\sH_p(a^2)=&-(c_+^2\phi_+^2+c_-^2\phi_-^2-h\phi_+\sH_{p_1}\phi_++h\phi_-\sH_{p_1}\phi_-)\\
&\qquad\qquad\qquad\qquad\times(\chi_0\chi_0')(\phi_+^2-\phi_-^2+\kappa)
\chi(\phi_+^2)^2\psi(p)^2\\
&- (c_+^2\phi_+^2-h\phi_+\sH_{p_1}\phi_+)(\chi'\chi)(\phi_+^2)\chi_0(\phi_+^2-\phi_-^2+\kappa)^2\psi(p)^2.
\end{split}\end{equation*}
Now, \eqref{eq:pos-commutator-trap-weak} becomes
\begin{equation}\begin{split}\label{eq:pos-commutator-trap-weak-Weyl}
\frac{i}{4 h}[P,A^*A]=&-(C_+A_+)^*(C_+A_+)-(C_-A_-)^*(C_-A_-)\\
&\qquad+h(A_+^*G_++G_+^*A_++A_-^*G_-+G_-^*A_-)+E+h^2F,
\end{split}\end{equation}
with $G_\pm$ being the Weyl quantization of
$$
g_\pm=\pm\frac{1}{2}(\sH_{p_1}\phi_\pm) \sqrt{(\chi_0\chi_0') (\phi_+^2-\phi_-^2+\kappa)}\chi(\phi_+^2)\psi(p),
$$
and with $F\in\Psih(X)$ with
$$
\WFh'(F)\subset\supp a.
$$
Correspondingly, \eqref{eq:pos-comm-est-trap} becomes
\begin{equation}\begin{split}\label{eq:pos-comm-est-trap-Weyl}
\|C_+A_+ u\|^2+\|C_-A_- u\|^2\leq&\ |\la Eu,u\ra|+h^{-1}|\la
APu,Au\ra|\\
&\quad+2h\|A_+ u\|\|G_+u\|+2h\|A_-u\|\|G_- u\|+C_1 h^2\|u\|^2.
\end{split}\end{equation}
The terms with $G_\pm$ on the right hand side can be estimated by
$$
\ep\|A_+ u\|^2+\ep^{-1}h^2\|G_+u\|^2+\ep\|A_- u\|^2+\ep^{-1}h^2\|G_-u\|^2,
$$
and for $\ep>0$ sufficiently small, the $\|A_\pm u\|^2$ terms can now
be absorbed into the left hand side of
\eqref{eq:pos-comm-est-trap-Weyl}. Proceeding as above yields
\begin{equation}\label{eq:microloc-iso-est}
\|C_+A_+ u\|^2+\|C_-A_- u\|^2\leq C|\la
Eu,u\ra|+Ch^{-2}\|APu\|_{\normiso^*}^2+Ch^2\|u\|^2.
\end{equation}
Together with the non-trapping for the $E$ term this gives the global estimate
$$
\|u\|_{\normiso}^2\leq Ch^{-2}\|Pu\|_{\normiso^*}^2+Ch^2\|u\|^2,
$$
and now the last term on the right hand side can be absorbed into the
left hand side for sufficiently small $h$, giving the estimate \eqref{eq:stronger-nontrapping-estimate}.

Notice that this also directly gives a weaker version of the Wunsch-Zworski estimate \eqref{eq:log-global-estimate}, namely
$$
\|u\|_{L^2}\leq Ch^{-2}\|Pu\|_{L^2},
$$
in view of the continuity of the inclusions $\normiso\hookrightarrow h^{-1/2}L^2$ and $h^{1/2}L^2\hookrightarrow\normiso^*$.

\begin{rmk}
\label{RmkEll}
  If one is interested in a fixed operator, rather than in a parameter-dependent family of operators, one can naturally strengthen the estimates \eqref{eq:weaker-nontrapping-estimate}--\eqref{eq:stronger-nontrapping-estimate} by adding $h^{-1}\|\hat P_0 u\|$ to the left hand sides, where $\hat P_0$ is any elliptic multiple of $P$. A more natural way of phrasing such an improvement is to use `coisotropic, normally isotropic' spaces $\tnormiso$ and $\tnormiso^*$, where the squared norm on $\tnormiso$ is defined by
  \[
    \|u\|_{\tnormiso}^2=\|Q_0 u\|^2+\|Q_+ u\|^2+\|Q_- u\|^2+h^{-1}\|\hat P_0 u\|^2+h\|u\|^2,
  \]
  which strengthens the space and therefore weakens its dual. Using these spaces instead in \eqref{EqPairing}, one obtains an additional term from $h\|Au\|_{\tnormiso}^2$, namely $\|\hat P_0 Au\|^2$, which is bounded by $C(h^{-1}\|AP u\|_{\tnormiso^*}^2+h\|u\|^2)$, and thus the remainder of the second proof goes through.
\end{rmk}

\section{Non-trapping estimates in non-dilation invariant settings}\label{sec:b}

We now transfer Theorem~\ref{thm:Gamma} into the b-setting; the discussion in the previous section is essentially the dilation invariant special case of this, as we will explain below, though in the b-setting there is additional localization near the boundary. One main application of the b-estimate is in the analysis of linear and non-linear waves on asymptotically Kerr-de Sitter spaces; see \cite{Va12,HintzVasyQuasilinearKdS} for details.

\subsection{Notation and definitions}
\label{SubsecBNotation}

For a general reference for b-analysis, see Melrose \cite{Me93}.

Let $M$ be an $n$-dimensional compact manifold with boundary $X$.

\begin{itemize}[leftmargin=0.8cm]
\item Let $\Vb(M)$ be the Lie algebra of \emph{b-vector fields} on $M$, i.e.\ of vector fields on $M$ which are tangent to $X$. Elements of $\Vb(M)$ are sections of a natural vector bundle on $M$, namely the \emph{b-tangent bundle} $\Tb M$; in local coordinates $(\tau,x)$ near $X$, the fibers of $\Tb M$ are spanned by $\tau\pa_\tau$ and $\pa_x$. The fibers of the dual bundle $\Tb^* M$, called \emph{b-cotangent bundle}, are spanned by $\frac{d\tau}{\tau}$ and $dx$.

It is often convenient to consider the fiber compactification $\overline{\Tb^*}M$ of $\Tb^*M$, where the fibers are replaced by their radial compactification. The new boundary of $\overline{\Tb^*}M$ at fiber infinity is the \emph{b-cosphere bundle} $\Sb^*M$; it still possesses the compactification of the `old' boundary $\overline{\Tb^*}_XM$, see Figure~\ref{fig:Tb*M}. $\Sb^*M$ is naturally the quotient of $\Tb^*M\setminus o$ by the $\RR^+$-action of dilation in the fibers of the cotangent bundle. Many sets that we will consider below are conic subsets of $\Tb^*M\wozero$, and we will often view them as subsets of $\Sb^*M$.
\begin{figure}[!ht]
  \centering
  \includegraphics{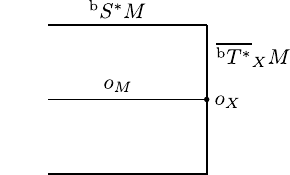}
  \caption{The radially compactified cotangent bundle $\overline{\Tb^*}M$ near $\overline{\Tb^*}_XM$; the cosphere bundle $\Sb^*M$, which is the boundary at fiber infinity of $\overline{\Tb^*}M$, is also shown, as well as the zero section $o_M\subset\overline{\Tb^*}M$ and the zero section over the boundary $o_X\subset\overline{\Tb^*}_XM$.}
  \label{fig:Tb*M}
\end{figure}

\item For $a\in\CI(\Tb^* M)$, we say $a\in S^m(\Tb^* M)$ if $a$ satisfies
  \[
    |\pa_z^\alpha\pa_\zeta^\beta a(z,\zeta)|\leq C_{\alpha\beta}\la\zeta\ra^{m-|\beta|}\tn{ for all multiindices }\alpha,\beta
  \]
  in any coordinate chart, where $z$ are coordinates in the base and $\zeta$ coordinates in the fiber; more precisely, in local coordinates $(\tau,x)$ near $X$, we take $\zeta=(\sigma,\xi)$, where we write b-covectors as
  \[
    \sigma\,\frac{d\tau}{\tau}+\sum_j\xi_j\,dx_j.
  \]
  We define the quantization $\Op(a)$ of $a$, acting on smooth functions $u$ supported in a coordinate chart, by
  \begin{align*}
    \Op(a)u(\tau,x)=(2\pi)^{-n}\int &e^{i(\tau-\tau')\tilde\sigma+i(x-x')\xi}\phi\left(\frac{\tau-\tau'}{\tau}\right) \\
	  &\hspace{1cm}\times a(\tau,x,\tau\tilde\sigma,\xi)u(\tau',x')\,d\tau'\,dx'\,d\tilde\sigma\,d\xi,
  \end{align*}
  where the $\tau'$-integral is over $[0,\infty)$, and
  $\phi\in\CIc((-1/2,1/2))$ is identically $1$ near $0$.\footnote{The
    cutoff $\phi$ ensures that these operators lie in the `small
    b-calculus' of Melrose, in particular that such quantizations act on weighted b-Sobolev spaces, defined below.} For general $u$, define $\Op(a)u$ using a partition of unity. We write $\Op(a)\in\Psib^m(M)$. We say that $a$ is a \emph{symbol} of $\Op(a)$. The equivalence class of $a$ in $S^m(\Tb^*M)/S^{m-1}(\Tb^*M)$ is invariantly defined on $\Tb^*M$ and is called the \emph{principal symbol} of $\Op(a)$. We will tacitly assume that all our operators have homogeneous principal symbols.
\item If $A\in\Psib^{m_1}(M)$ and $B\in\Psib^{m_2}(M)$, then $[A,B]\in\Psib^{m_1+m_2-1}(M)$, and its principal symbol is $\frac{1}{i}\sH_a b\equiv\frac{1}{i}\{a,b\}$, where the Hamilton vector field $\sH_a$ of the principal symbol $a$ of $A$ is the extension of the Hamilton vector field from $T^*M^\circ\wozero$ to $\Tb^*M\wozero$, which is a homogeneous degree $m-1$ vector field on $\Tb^*M\setminus o$ tangent to the boundary $\Tb^*_X M$. In local coordinates $(\tau,x,\sigma,\xi)$ on $\Tb^*M$ as above, this has the form
  \begin{equation}\label{eq:b-Ham-vf}
    \sH_a=(\pa_\sigma a)(\tau\pa_\tau)-(\tau\pa_\tau a)\pa_\sigma+\sum_j \big((\pa_{\xi_j}a)\pa_{x_j}-(\pa_{x_j}a)\pa_{\xi_j}\big).
  \end{equation}
\item We define bicharacteristics completely analogously to the semiclassical setting.
\item The \emph{microsupport} $\WFb'(A)\subset\Tb^*M\wozero$ of $A=\Op(a)\in\Psib^m(M)$ is the complement of the set of all $\rho\in\Tb^*M\wozero$ such that $a$ is rapidly decaying in a conic neighborhood around $\rho$. Note that $\WFb'(A)$ is conic, hence we will also view it as a subset of $\Sb^*M$.
\item Fix a \emph{b-density} on $M$, which is locally of the form $a\left|\frac{d\tau}{\tau}\,dz\right|$, $a>0$.
\item Define the \emph{b-Sobolev space} $\Hb^k(M)$ for $k\in\Z_{\geq 0}$ by
  \[
    \Hb^k(M)=\{u\in L^2(M)\colon X_1\cdots X_k u\in L^2(M), X_1,\ldots,X_j\in\Vb(M)\},
  \]
  and for general $k\in\R$ by duality and interpolation. Moreover, define the weighted b-Sobolev spaces $\Hb^{s,\alpha}(M):=\tau^\alpha\Hb^s(M)$ for $s,\alpha\in\R$, where $\tau$ is a boundary defining function, i.e.\ $\tau=0$ at $X$ and $d\tau\neq 0$ there. Every b-pseudodifferential operator $A\in\Psib^m(M)$ is a bounded operator $A\colon\Hb^{s,\alpha}(M)\to\Hb^{s-m,\alpha}(M)$, $s,\alpha\in\R$.
\item For $A\in\Psib^m(M)$ with principal symbol $a\in S^m(\Tb^*M)$, we say that $A$ is \emph{elliptic} at $\rho\in\Tb^*M\wozero$ if there is a constant $C>0$ such that $|a(z,\zeta)|\geq C|\zeta|^m$ for $(z,\zeta)$ in a conic neighborhood of $\rho$. The \emph{characteristic set} of $A$ is the complement (in $\Tb^*M\wozero$) of the set of all $\rho$ at which $A$ is elliptic.
\item For $u\in\Hb^{-\infty,\alpha}(M)$, define its \emph{$\Hb^{s,\alpha}$ wave front set} $\WFb^{s,\alpha}(u)\subset\Tb^*M\wozero$ as the complement of the set of all $\rho\in\Tb^*M\wozero$ for which there exists $a\in S^0(\Tb^*M)$ elliptic at $\rho$ such that $\Op(a)u\in\Hb^{s,\alpha}(M)$. In particular, $\WFb^{s,\alpha}(u)=\emptyset$ if and only if $u\in\Hb^{s,\alpha}(M)$.
\item \emph{Microlocal elliptic regularity} states that if $Au=f$ with $A\in\Psib^m(M)$, $u,f\in\Hb^{-\infty,\alpha}(M)$, $\rho\notin\WFb^{s-m,\alpha}(f)$ and $A$ is elliptic at $\rho$, then $\rho\notin\WFb^{s,\alpha}(u)$.
\item If $A\in\Psib^m(M)$ has a principal symbol with non-positive imaginary part, $u,f\in\Hb^{-\infty,\alpha}(M)$, $Au=f$, moreover $\rho\notin\WFb^{s,\alpha}(u)$ and $\gamma_\rho([0,T])\cap\WFb^{s-m+1,\alpha}(f)=\emptyset$ for some $T>0$, then the \emph{propagation of singularities} states that $\gamma_\rho(T)\notin\WFb^{s,\alpha}(u)$.
\end{itemize}

\subsection{Setup, statement and proof of the result}
\label{SubsecBResult}

Suppose $\cP\in\Psib^m(M)$, $\cP-\cP^*\in\Psib^{m-2}(M)$. Let $p$ be the principal symbol of $\cP$, which is thus a homogeneous degree $m$ function on $\Tb^*M\setminus o$, which we assume to be {\em real-valued}. Let $\tilde\rho$ denote a homogeneous degree $-1$ defining function of $\Sb^*M$. Then the rescaled Hamilton vector field
$$
V=\tilde\rho^{m-1}\sH_p
$$
is a $\CI$ vector field on $\overline{\Tb^*}M$ away from the 0-section, and it is tangent to all boundary faces. The characteristic set $\Sigma$ is the zero-set of the smooth function $\tilde\rho^m p$ in $\Sb^*M$. We will, somewhat imprecisely, refer to the flow of $V$ in $\Sigma\subset\Sb^*M$ as the Hamilton, or (null)bicharacteristic flow; its integral curves, the (null)bicharacteristics, are reparameterizations of those of the Hamilton vector field $\sH_p$, projected by the quotient map $\Tb^*M\setminus o\to\Sb^*M$.

We first work microlocally near the trapped set, namely assume that
\begin{enumerate}
\item $\Gamma\subset \Sigma\cap\Sb^*_XM$ is a smooth submanifold disjoint from the image of $T^*X\setminus o$ (so $\tau D_\tau$ is elliptic near
$\Gamma$),
\item $\Gamma_+$ is a smooth submanifold of $\Sigma\cap\Sb^*_X M$ in a neighborhood $U_1$ of $\Gamma$,
\item $\Gamma_-$ is a smooth submanifold of $\Sigma$ transversal to $\Sigma\cap\Sb^*_XM$ in $U_1$,
\item $\Gamma_+$ has codimension $2$ in $\Sigma$, $\Gamma_-$ has codimension $1$,
\item $\Gamma_+$ and $\Gamma_-$ intersect transversally in $\Sigma$ with $\Gamma_+\cap\Gamma_-=\Gamma$,
\item the rescaled Hamilton vector field $V=\tilde\rho^{m-1}\sH_p$ is tangent to both $\Gamma_+$ and $\Gamma_-$, and thus to $\Gamma$.
\end{enumerate}

\begin{figure}[!ht]
  \centering
  \includegraphics{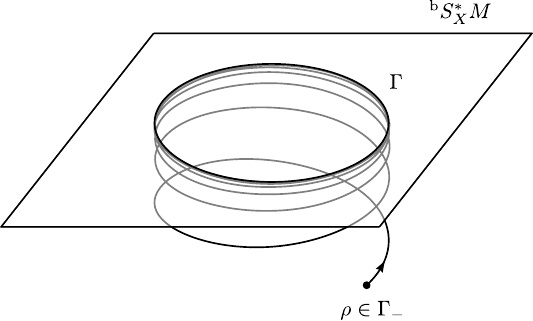}
  \caption{An exemplary situation with trapping: Shown are the (projection from $\Sb^*M$ to the base $M$ of the) trapped set $\Gamma$, the b-cosphere bundle over $X$ as well as a forward bicharacteristic starting at a point $\rho\in\Gamma_-$.}
  \label{FigBTrapping}
\end{figure}

We assume that $\Gamma_+$ is backward trapped for the
Hamilton flow (i.e.\ bicharacteristics in $\Gamma_+$ near $\Gamma$
tend to $\Gamma$ as the parameter goes to $-\infty$), i.e.\ is
the unstable manifold of $\Gamma$, while $\Gamma_-$ is forward
trapped, i.e.\ is the stable manifold of $\Gamma$, see Figure~\ref{FigBTrapping}; indeed, we assume
a quantitative version of this. (There is a completely analogous
statement if $\Gamma_+$ is forward trapped and $\Gamma_-$ is backward
trapped: replacing $\cP$ by $-\cP$ preserves all assumptions, but
reverses the Hamilton flow.)
To state this, let $\phi_-$ be a defining function of $\Gamma_-$, and let $\phi_+\in\CI(\Sb^*M)$ be a defining
function of $\Gamma_+$ in $\Sb^*_XM$; thus $\Gamma_+$ is defined
within $\Sb^*M$ by $\tau=0,\phi_+=0$.  Notice that $V$ being to tangent to $\Sb^*_X M$ (due to
\eqref{eq:b-Ham-vf}) implies that $V\tau$ is a multiple of $\tau$; we
assume that, near $\Gamma$,
\begin{equation}\label{eq:V-tau-neg}
V\tau=-c_\pa^2\tau,\ c_\pa>0;
\end{equation}
this is consistent with the stability of $\Gamma_-$. By the tangency requirement, with
$$
\hat p_0=\tilde\rho^m p,
$$
$V\phi_-=\alpha_-\phi_-+\nu_- \hat p_0$, $\alpha_-$ smooth; notice that changing
$\phi_-$ by a smooth non-zero multiple $f$ gives
$V(f\phi_-)=\alpha_-f\phi_-+\nu_-f\hat p_0 +(Vf)\phi_-$, so $\alpha_-$ depends on the
choice of $\phi_-$.
On the other hand, the tangency requirement gives
$V\phi_+=\alpha_+\phi_++\beta_+\tau+\nu_+\hat p_0$.
For the sake of conciseness, rather than stating the assumptions on the Hamilton flow as in
\cite{Wu11}, we assume directly that $\phi_\pm$ satisfy
\begin{equation}\label{eq:V-phi-pm}
V\phi_-=c_-^2\phi_-+\nu_-\hat p_0,\
V\phi_+=-c_+^2\phi_++\beta_+\tau+\nu_+\hat p_0,
\end{equation}
with $c_\pm>0$ smooth near $\Gamma$, $\beta_+,\nu_\pm$ smooth near
$\Gamma$ and
\begin{equation}\label{eq:phi-pm-Poisson}
\ \{\phi_+,\phi_-\}>0
\end{equation}
near $\Gamma$.
Let $U_0\subset\overline{U_0}\subset  
U_1$ be a neighborhood of  
$\Gamma$ such that the Poisson bracket in \eqref{eq:phi-pm-Poisson} as  
well as $c_\pm$ have positive lower bounds.

Now, given a boundary defining function $\tau$, $\Sb^*_XM\setminus S^*X$ (where $S^*X$ is the image of $T^*X$
under the $\RR^+$-dilation quotient) can be identified with
$(\frac{d\tau}{\tau}+T^*X)\cup(-\frac{d\tau}{\tau}+T^*X)$ (which in turn can be identified with two copies of $T^*X$)
since $\Tb^*_XM=\mathrm{Span}\big\{\frac{d\tau}{\tau}\big\}\oplus
T^*X$, and each $\RR^+$-orbit outside $T^*X$ intersects
$(\frac{d\tau}{\tau}+T^*X)\cup(-\frac{d\tau}{\tau}+T^*X)$ in a unique point. This provides the
connection between the b- and the semiclassical perspectives, i.e.\
analysis on $\Tb^*_X M$ and that of $T^*X$. In fact, if $p$ is a homogeneous degree
$m$ function, then $\tilde\rho^m p$, where $\tilde\rho$ can be
taken as the reciprocal of the absolute value of the symbol of $\tau D_\tau$ in this region
(which is well-defined, independent of choices),
gives a function on $\{\pm 1\}\times T^*X$; this is exactly the semiclassical
rescaling, with $\tilde\rho^m p$ the semiclassical principal symbol (depending on the parameter $\pm 1$) of
the rescaled operator family (cf.\ \cite[\S2.1]{Va12}).

Notice that if we merely assume the normal hyperbolicity within
$\Sb^*_XM$, in the sense of this identification with $T^*X$, as in \cite[\S1.2]{Wu11}, then \cite[Lemma~4.1]{Wu11}, as corrected in \cite{Wu11c},
actually gives such defining functions $\phi_\pm^0$ {\em within}
$\Sb_X^*M$ (i.e.\ letting $\tau=0$); taking an arbitrary extension in case of $\phi_+$, and an
extension which is a defining function in case of $\Gamma_-$, all the
requirements above are satisfied. In particular, Kerr and Kerr-de Sitter spaces
satisfy these assumptions, as do their perturbations when the angular
momentum $|a|$ is small; see \cite[Proposition~2.1]{Wu11} for the Kerr setting and
\cite[\S6]{Va12} for the Kerr-de Sitter one. Indeed, in the Kerr
case the full range of $|a|<M$, $M$ the black hole mass, satisfies the
hypotheses, as shown by Dyatlov \cite{Dyatlov:Asymptotics}.

There is an asymmetry between the roles of $\phi_\pm$
and $\tau$, and thus we consider the parabolic defining function
$$
\rho_+=\phi_+^2+\newconst\tau
$$
for $\Gamma_+$, $\newconst>0$, to be chosen. Then, near $\Gamma$,
\begin{equation}\begin{split}\label{eq:Gamma_+-quad-def}
\hat\rho_+=V\rho_+&=-2c_+^2\phi_+^2+2\beta_+\phi_+\tau+2\nu_+\phi_+\hat p_0-\newconst c_\pa^2\tau\\
&=-2c_+^2\phi_+^2-(\newconst c_\pa^2-2\beta_+\phi_+)\tau+2\nu_+\phi_+\hat p_0\\
&\leq -\tilde c_+^2\rho_++2\nu_+\phi_+\hat p_0,\qquad \tilde c_+>0,
\end{split}\end{equation}
if $\newconst>0$ is chosen sufficiently large, consistently with the
forward trapped nature of $\Gamma_-$. (Here the term with $\hat p_0$
is considered harmless as one essentially restricts to the
characteristic set, $\hat p_0=0$.)
Also, note that one can use\footnote{Indeed, in the semiclassical
  setting, after Mellin transforming this problem, $|\sigma|^{-1}$ plays the role
  of the semiclassical parameter $h$, which in that case {\em commutes}
  with the operator.} the reciprocal $\tilde\rho=|\sigma|^{-1}$ of
the principal symbol $\sigma$ of $\tau D_\tau$ as the local defining
function of $\Sb^*M$ as
fiber-infinity in $\Tb^*M$ near $\Gamma$; then
\begin{equation}\label{eq:V-tilde-rho}
V\tilde\rho=\tilde\alpha\tilde\rho\tau
\end{equation}
for some $\tilde\alpha$ smooth in view of \eqref{eq:b-Ham-vf}.

Similar to the normally isotropic spaces in the semiclassical setting, we introduce spaces which are {\em normally isotropic at $\Gamma$}.\footnote{Note that $\Tb^*M$ is {\em not} a symplectic manifold (in a natural way) since the symplectic form on $\Tb^*_{M^\circ}M$ does not extend smoothly to $\Tb^*M$. Thus, the word `normally isotropic' is not completely justified; we use it since it reflects that in the analogous semiclassical setting, see \cite{Wu11}, the set $\Gamma$ is symplectic, and the origin in the symplectic orthocomplement $(T_\alpha\Gamma)^\perp$ of $T_\alpha\Gamma$, which is also symplectic, is isotropic within $(T_\alpha\Gamma)^\perp$.} Concretely, let $Q_\pm\in\Psib^0(M)$ have principal symbol $\phi_\pm$ as before, $\hat P_0\in\Psib^0(M)$ have principal symbol $\hat p_0$ and let $Q_0\in\Psib^0(M)$ be elliptic, with real principal symbol for convenience, on $U_0^c$ (and thus nearby). Define the (global) b-normally isotropic spaces at $\Gamma$ of order $s$, $\bnormiso^s$, by the norm
\begin{equation}
\label{eq:b-norm-isotropic}
\|u\|_{\bnormiso^s}^2=\|Q_0 u\|^2_{\Hb^s}+\|Q_+u\|^2_{\Hb^s}+\|Q_-u\|^2_{\Hb^s}+\|\tau^{1/2} u\|^2_{\Hb^s}+\|\hat P_0 u\|^2_{\Hb^s}+\|u\|^2_{\Hb^{s-1/2}},
\end{equation}
and let $\bnormiso^{*,-s}$ be the dual space relative to $L^2$, which is thus\footnote{\label{FootDual}We refer to \cite[Appendix~A]{Melrose-Vasy-Wunsch:Corners} for a general discussion of the underlying functional analysis. In particular, Lemma~A.3 there essentially gives the density of $\dCI(M)$ in $\bnormiso^s(M)$: one can simply drop the subscript `e' in the statement of that lemma to conclude that $\Hb^\infty(M)$ (so in particular $\Hb^s(M)$) is dense in $\bnormiso^s(M)$, and then the density of $\dCI(M)$ in $\Hb^{s'}(M)$ for any $s'$ completes the argument. The completeness of $\bnormiso^s(M)$ follows from the continuity of $\Psib^0(M)$ on $\Hb^{s-1/2}(M)$.}
$$
Q_0\Hb^{-s}+Q_+\Hb^{-s}+Q_-\Hb^{-s}+\tau^{1/2}\Hb^{-s}+\hat P_0\Hb^{-s}+\Hb^{-s+1/2}.
$$
Note that microlocally away from $\Gamma$, $\bnormiso^s$ is just the standard $\Hb^s$ space while $\bnormiso^{*,-s}$ is $\Hb^{-s}$ since at least one of $Q_0$, $Q_\pm$ and $\tau$ is elliptic. Moreover, $\Psib^k(M)\ni A:\bnormiso^s\to\bnormiso^{s-k}$ is continuous since $[Q_+,A]\in\Psib^{k-1}(M)$ etc.; the analogous statement also holds for the dual spaces.  Further, the last term in \eqref{eq:b-norm-isotropic} can be replaced by $\|u\|^2_{\Hb^{s-1}}$ as $i[Q_+,Q_-]=B^*B+R$, $B\in\Psib^{-1/2}(M)$, $R\in\Psib^{-2}(M)$, using the same argument as in the semiclassical setting (however, it cannot be dropped altogether unlike in the semiclassical setting!).

\begin{rmk}
  The notation $\bnormiso^s(M)$ is justified for the space is independent of the particular defining functions $\phi_\pm$ chosen; near $\Gamma$ any other choice would replace $\phi_\pm$ by smooth non-degenerate linear combinations plus a multiple of $\tau$ and of $\hat p_0$, denote these by $\tilde\phi_\pm$, and thus the corresponding $\tilde Q_\pm$ can be expressed as
  \[
    B_+Q_++B_- Q_-+B_\pa\tau+\hat B\hat P_0+B_0Q_0+R,\quad
    B_\pm,B_0,B_\pa,\hat B\in\Psib^0(M),\ R\in\Psib^{-1}(M),
  \]
  so the new norm can be controlled by the old norm, and conversely in view of the non-degeneracy.
\end{rmk}

Our result is then:

\begin{thm}\label{thm:b-normiso-propagation}
  With $\cP,\bnormiso^s,\bnormiso^{*,s}$ as above, for any neighborhood $U$ of $\Gamma$ and for any $N$ there exist $B_0\in\Psib^0(M)$ elliptic at $\Gamma$ and $B_1,B_2\in\Psib^0(M)$ with $\WFb'(B_j)\subset U$, $j=0,1,2$, $\WFb'(B_2)\cap\Gamma_+=\emptyset$ and $C>0$ such that
  \begin{equation}\label{eq:b-normiso}
  \|B_0 u\|_{\bnormiso^s}\leq \|B_1\cP u\|_{\bnormiso^{*,s-m+1}}+\|B_2u\|_{\Hb^s}+C\|u\|_{\Hb^{-N}},
  \end{equation} 
  i.e.\ if all the functions on the right hand side are in the indicated spaces: $B_1\cP u\in \bnormiso^{*,s-m+1}$, etc., then $B_0 u\in\bnormiso^s$, and the inequality holds.
  
  The same conclusion also holds if we assume $\WFb'(B_2)\cap\Gamma_-=\emptyset$ instead of $\WFb'(B_2)\cap\Gamma_+=\emptyset$.
  
  Finally, if $r<0$, then, with $\WFb'(B_2)\cap\Gamma_+=\emptyset$, \eqref{eq:b-normiso} becomes
  \begin{equation}\label{eq:b-normiso-offspect}
    \|B_0 u\|_{\Hb^{s,r}}\leq \|B_1\cP u\|_{\Hb^{s-m+1,r}}+\|B_2u\|_{\Hb^{s,r}}+C\|u\|_{\Hb^{-N,r}},
  \end{equation}
  while if $r>0$, then, with $\WFb'(B_2)\cap\Gamma_-=\emptyset$,
  \begin{equation}\label{eq:b-normiso-offspect-reverse}
    \|B_0 u\|_{\Hb^{s,r}}\leq \|B_1\cP u\|_{\Hb^{s-m+1,r}}+\|B_2u\|_{\Hb^{s,r}}+C\|u\|_{\Hb^{-N,r}},
  \end{equation}
\end{thm}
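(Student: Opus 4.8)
The plan is to transplant the positive-commutator argument from the proof of Theorem~\ref{thm:Gamma} into the b-calculus, replacing $\sH_p$ by the rescaled Hamilton vector field $V=\tilde\rho^{m-1}\sH_p$, the role of $\phi_+^2$ by the parabolic defining function $\rho_+=\phi_+^2+\newconst\tau$ of $\Gamma_+$, and using the structure relations \eqref{eq:V-phi-pm}, \eqref{eq:Gamma_+-quad-def}, \eqref{eq:V-tilde-rho}, \eqref{eq:V-tau-neg} in place of the semiclassical ones. With $\tilde\rho=|\sigma|^{-1}$ the defining function of $\Sb^*M$ near $\Gamma$ as in \eqref{eq:V-tilde-rho}, I would take as commutant the quantization $A\in\Psib^{(2s-m+1)/2}(M)$ of
\[ a=\tilde\rho^{-(2s-m+1)/2}\,\chi_0(\rho_+-\phi_-^2+\kappa)\,\chi(\rho_+)\,\psi(\hat p_0)\,\zeta, \]
with $\chi_0,\chi,\chi_1,\psi$ exactly as in the proof of Theorem~\ref{thm:Gamma} ($\chi_0(t)=e^{-1/t}$ for $t>0$, $\chi'\chi=-\chi_1^2$, $\psi\equiv 1$ near $0$), $\kappa>0$ small, and $\zeta$ a degree-$0$ cutoff, supported in (the conic set corresponding to) $U$ and identically $1$ on a neighborhood of $\supp(\chi_0\chi\psi)$; the power of $\tilde\rho$ is chosen so that $[\cP,A^*A]\in\Psib^{2s}(M)$. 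As in the semiclassical case, $\supp a$ lies in an arbitrarily small neighborhood of $\Gamma$ once the supports of $\chi$, $\psi$ and $\kappa$, $\newconst^{-1}$ are taken small, since $\rho_+$ bounded forces both $\phi_+^2$ and $0\le\tau$ bounded; in particular $\zeta$ then contributes nothing to $V(a^2)$.

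Next I would compute $V(a^2)$. Using $V\rho_+\le-\tilde c_+^2\rho_++2\nu_+\phi_+\hat p_0$ from \eqref{eq:Gamma_+-quad-def}, $V\phi_-^2=2c_-^2\phi_-^2+2\nu_-\phi_-\hat p_0$ from \eqref{eq:V-phi-pm}, and (from $\sH_p p=0$ and \eqref{eq:V-tilde-rho}) that $V\hat p_0$ is a multiple of $\hat p_0$ and $V\tilde\rho$ a multiple of $\tilde\rho\tau$, one obtains, parallel to \eqref{eq:Gamma-commutator}, a decomposition of the form
\[ \tfrac14 V(a^2)=-c_+^2a_+^2-c_-^2a_-^2-\newconst\,c_\pa^2\,\tau\,\tilde a^2+e^2+\hat p_0\,r_0, \]
where $a_\pm$ carries a factor $\phi_\pm$ (hence vanishes at $\Gamma$) while $\tilde a$ does not, the $\tau$-term (present because $\rho_+=\phi_+^2+\newconst\tau$ and $V\tau=-c_\pa^2\tau$) is negative for $\newconst$ large as in \eqref{eq:Gamma_+-quad-def}, $e$ carries a factor $\chi_1(\rho_+)$ and is therefore supported away from $\Gamma_+=\{\rho_+=0\}$, and $r_0$ is smooth. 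Quantizing --- using the Weyl quantization, equivalently the invariance of the real part of the subprincipal symbol of an operator with real principal symbol (Footnote~\ref{footnote:Weyl}), together with $\cP-\cP^*\in\Psib^{m-2}(M)$ --- produces the b-analogue of \eqref{eq:pos-commutator-trap-weak-Weyl}: $\tfrac i4[\cP,A^*A]$ equals a negative-definite sum $-(C_+A_+)^*(C_+A_+)-(C_-A_-)^*(C_-A_-)-(\tau^{1/2}\tilde A)^*(\tau^{1/2}\tilde A)$, plus a self-adjoint $E\in\Psib^{2s}(M)$ with $\WFb'(E)\subset U\setminus\Gamma_+$, plus the subprincipal transport terms $A_\pm^*G_\pm+G_\pm^*A_\pm$, plus an operator with microsupport in $U$ whose $\hat p_0$-factor lets it be moved into $\|B_1\cP u\|_{\bnormiso^{*,s-m+1}}$ (writing $\hat P_0=\Op(\tilde\rho^m)\cP$ modulo lower order), plus an element of $\Psib^{2s-1}(M)$ microsupported in $U$. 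Pairing with $u$, writing the left side as $2\Im\la\cP u,A^*Au\ra$ up to the $\cP-\cP^*\in\Psib^{m-2}(M)$ term, using $|\la A\cP u,Au\ra|\le\|A\cP u\|_{\bnormiso^{*,s-m+1}}\|Au\|_{\bnormiso^s}$ via the $\bnormiso^s$/$\bnormiso^{*,s-m+1}$ duality, absorbing $\epsilon\|Au\|_{\bnormiso^s}^2$ into the left side (the $Q_\pm$- and $\tau^{1/2}$-components of $\|Au\|_{\bnormiso^s}$ are dominated on $\supp a$ by $C_\pm A_\pm$ and $\tau^{1/2}\tilde A$ by ellipticity of $C_\pm$, $\tilde a$ and comparison of symbols, the $Q_0$-component is smoothing as $\WFb'(Q_0)$ is away from $\Gamma$, and the $\Hb^{s-1/2}$-part is handled by the norm-equivalence remark after \eqref{eq:b-norm-isotropic}), and absorbing the $G_\pm$-terms as in the semiclassical argument, gives \eqref{eq:b-normiso} with $B_0$ elliptic at $\Gamma$, $B_1$ elliptic on $\WFb'(A)$, $\WFb'(B_j)\subset U$, $\WFb'(B_2)\subset U\setminus\Gamma_+$. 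Since $u$ is a priori only in $\Hb^{-N}$, one runs the argument with $A$ replaced by $A\Lambda_\delta$, where $\Lambda_\delta\in\Psib^{-\infty}(M)$ is bounded in $\Psib^0(M)$ with $\Lambda_\delta\to\Id$ in $\Psib^\epsilon(M)$ as $\delta\downarrow 0$, obtains bounds uniform in $\delta$, and lets $\delta\downarrow 0$; this also yields the ``iff'' assertion. The variant with $\WFb'(B_2)\cap\Gamma_-=\emptyset$ follows by applying the result to $-\cP$, which reverses $V$ and interchanges $\Gamma_+\leftrightarrow\Gamma_-$ while preserving \eqref{eq:V-phi-pm}--\eqref{eq:phi-pm-Poisson}.

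For \eqref{eq:b-normiso-offspect} and \eqref{eq:b-normiso-offspect-reverse}, substitute $u=\tau^r v$ and reduce to the same argument for $\mathcal P_r:=\tau^{-r}\cP\tau^r$. Its principal symbol is still $p$, but by $V\tau=-c_\pa^2\tau$ (see \eqref{eq:V-tau-neg}) one has $\mathcal P_r=\cP+iG'+\Psib^{m-2}(M)$ with $G'\in\Psib^{m-1}(M)$ of principal symbol $r\,c_\pa^2\,\tilde\rho^{1-m}$, so the commutator pairing picks up the extra term $\tfrac1i\la(\mathcal P_r-\mathcal P_r^*)A^*Av,v\ra$ with leading symbol $2r\,c_\pa^2\,\tilde\rho^{1-m}a^2$. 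The decisive point is that this is a multiple of $a^2$ which --- unlike $c_\pm^2a_\pm^2$ and $\tau\tilde a^2$ --- is elliptic near $\Gamma$. Hence for $r<0$ it contributes a nonnegative, microlocally elliptic-near-$\Gamma$ term to the left side, so the argument closes with the plain weighted norm $\|B_0u\|_{\Hb^{s,r}}$ on the left --- no normally isotropic structure needed --- yielding \eqref{eq:b-normiso-offspect} with $\WFb'(B_2)\cap\Gamma_+=\emptyset$. For $r>0$ this term has the wrong sign for the flow of $\cP$; applying the $r<0$-type argument to $-\cP$ (reversing $V$, so $\tau$ increases along the relevant flow and the weight contribution becomes favorable, and swapping $\Gamma_+\leftrightarrow\Gamma_-$) yields \eqref{eq:b-normiso-offspect-reverse} with $\WFb'(B_2)\cap\Gamma_-=\emptyset$.

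I expect the main obstacle to be organizational rather than conceptual: carrying the extra $\tau$- and $\hat p_0$-terms through the commutator computation and verifying they stay harmless (the $\tau$-terms via $\newconst$ large in $\rho_+=\phi_+^2+\newconst\tau$ as in \eqref{eq:Gamma_+-quad-def}; the $\hat p_0$-terms absorbed into $\|B_1\cP u\|_{\bnormiso^{*,s-m+1}}$), while making every pairing legitimate --- coordinating the $\bnormiso^s$/$\bnormiso^{*,s-m+1}$ duality, the density and completeness facts of Footnote~\ref{FootDual}, the continuity of $\Psib^0(M)$ on these spaces, and the uniform-in-$\delta$ regularization --- so that nothing genuinely of ``normally isotropic'' rather than $\Hb^s$ strength is silently dropped. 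The one genuinely new computation is the sign of the weight contribution $2r\,c_\pa^2\,\tilde\rho^{1-m}a^2$, which produces the $r\gtrless 0$ dichotomy between \eqref{eq:b-normiso-offspect} and \eqref{eq:b-normiso-offspect-reverse}.
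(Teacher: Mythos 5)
Your overall architecture is the same as the paper's, but there are three genuine gaps.

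First, your announced decomposition of $\tfrac14 V(a^2)$ is incomplete. Because $a$ carries the weight $\tilde\rho^{-s+(m-1)/2}$ and $V\tilde\rho=\tilde\alpha\tau\tilde\rho$ by \eqref{eq:V-tilde-rho}, there is an additional term $\propto(-2s+m-1)\tilde\alpha\,\tau\,\tilde\rho^{-2s}\chi_0(\cdot)^2\chi(\cdot)^2\psi^2$ (with $\chi_0^2$, not $\chi_0\chi_0'$) whose sign is not controlled, since $\tilde\alpha$ has no definite sign. This is precisely why the paper replaces $\chi_0(t)=e^{-1/t}$ by $\chi_0(t)=e^{-\digamma/t}$ with $\digamma$ large (so that $\chi_0'$ dominates $\chi_0$ on $\supp a$), absorbing this weight-derivative into $a_\pa^2$. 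You keep the semiclassical $\chi_0$ and never write this term down; as stated your decomposition is not a sum of the claimed signs, and the same issue recurs with the regularizer.

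Second, the remainder $F\in\Psib^{2s-2}(M)$ (your "element of $\Psib^{2s-1}(M)$") has microsupport in $U$ containing $\Gamma$ and in particular $\Gamma_+$, so its contribution $\|\tilde F u\|^2_{\Hb^{s-1}}$ cannot be absorbed into $\|B_2u\|_{\Hb^s}$ (which is microsupported off $\Gamma_+$) nor into $C\|u\|_{\Hb^{-N}}$. The $\Lambda_\delta$ regularization only makes the pairings legitimate for $u\in\Hb^{-N}$; it does not remove this error. The paper instead proves a weaker estimate \eqref{eq:weaker-est-b-trap} with an extra $\|B_3u\|_{\Hb^{s-1}}$ term and then iterates: shrinking $R,\kappa$ slightly and gaining half an order at a time, one bootstraps from $\Hb^{-N}$ to $\bnormiso^s$ near $\Gamma$. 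You skip this bootstrap entirely.

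Third, your derivations of the $\WFb'(B_2)\cap\Gamma_-=\emptyset$ variant and of \eqref{eq:b-normiso-offspect-reverse} via ``apply the result to $-\cP$'' do not go through, because the structural hypotheses are not invariant under $V\mapsto -V$: \eqref{eq:V-tau-neg} forces $V\tau<0$, and the geometry is asymmetric ($\Gamma_+\subset\Sb^*_XM$ is codimension two in $\Sigma$, whereas $\Gamma_-$ is transversal to $\Sb^*_XM$, codimension one). Replacing $\cP$ by $-\cP$ would require $V\tau>0$ and swap the codimensions, neither of which is assumed. The paper instead keeps $\cP$ fixed and swaps the roles of $\rho_+$ and $\phi_-^2$ inside the commutant $a$; this flips the sign of the commutator while leaving the weight term $\propto r\,c_\pa^2$ unchanged, which is what produces the $r\gtrless 0$ dichotomy.
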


\begin{rmk}
Note that the weighted versions
\eqref{eq:b-normiso-offspect}-\eqref{eq:b-normiso-offspect-reverse}
use {\em standard} weighted b-Sobolev spaces; this corresponds to
non-trapping semiclassical estimates if the subprincipal symbol has the
correct, definite, sign at $\Gamma$.
\end{rmk}

\begin{proof}
We may assume that $U\subset U_0$ is disjoint from a neighborhood of
$\WFb'(Q_0)$, and thus ignore $Q_0$ in the definition of $\bnormiso^s$ below.

We first prove that there exist $B_0,B_1,B_2$ as above and
$B_3\in\Psib^0(M)$ with $\WFb'(B_3)\subset U$ such that
\begin{equation}\label{eq:weaker-est-b-trap}
\|B_0 u\|_{\bnormiso^s}\leq \|B_1\cP u\|_{\bnormiso^{*,s-m+1}}+\|B_2u\|_{\Hb^s}+\|B_3u\|_{\Hb^{s-1}}+C\|u\|_{\Hb^{-N}}.
\end{equation}
An iterative argument will then prove the theorem.

The proof is a straightforward modification of the construction in the
semiclassical setting above, replacing $\phi_+^2$ by
$\phi_+^2+\newconst\tau$, $\newconst>0$ large, in accordance with
\eqref{eq:Gamma_+-quad-def}.

We start by pointing out that for any $\tilde B_0\in\Psib^0(M)$ and any
$\tilde B_3\in\Psib^0(M)$ elliptic on $\WFb'(\tilde B_0)$, we have
\begin{equation}
\label{EqEllEst}
  \|\hat P_0 \tilde B_0 u\|_{\Hb^s}\leq C\|\tilde B_0\cP u\|_{\Hb^{s-m}}+C'\|\tilde B_3u\|_{\Hb^{s-1}},
\end{equation}
by simply using that $\hat P_0$ is an elliptic
multiple of $\cP$ modulo $\Psib^{-1}(M)$. Since
$\|\tilde B_0\cP u\|_{\Hb^{s-m}}\leq C\|\tilde B_0 \cP u\|_{\bnormiso^{*,s-m}}$, the
$\hat P_0$ contribution to $\|\tilde B_0 u\|_{\bnormiso^s}$ in
\eqref{eq:weaker-est-b-trap} is thus automatically controlled.

So let $\chi_0(t)=e^{-\digamma/t}$ for $t>0$,
$\chi_0(t)=0$ for $t\leq 0$, with $\digamma>0$ (large) to be specified, $\chi\in\CI_c([0,\infty))$ be identically $1$ near $0$ with $\chi'\leq
0$, and indeed with $\chi'\chi=-\chi_1^2$, $\chi_1\geq 0$,
$\chi_1\in\CI_c([0,\infty))$,
and let $\psi\in\CI_c(\RR)$ be identically $1$ near $0$.
As we use the Weyl quantization,\footnote{Again, the Weyl quantization
is irrelevant: If $A\in\Psib^m(X)$ and the
  principal symbol of $A$ is real then the real part
  of the subprincipal symbol is defined independently of choices,
  which suffices below.} we write $\cP$ as the Weyl
quantization of $p=p_0+\tilde\rho p_1$, with $\tilde\rho p_1$ of order $m-1$.
Let
\begin{equation}\label{eq:a-def-normiso}
a=\tilde\rho^{-s+(m-1)/2}\chi_0(\rho_+-\phi_-^2+\kappa)\chi(\rho_+)\psi(\tilde\rho^m p),
\end{equation}
$\kappa>0$ small.
Notice that on $\supp a$, if $\chi$ is supported in $[0,R]$,
$$
\rho_+\leq R,\ \phi_-^2\leq\rho_++\kappa=R+\kappa,
$$
so $a$ is localized near $\Gamma$ if $R$ and $\kappa$ are taken
sufficiently small.
In particular, the argument of $\chi_0$ is bounded above by
$R+\kappa$, so given any $\newconst_0>0$ one can take $\digamma>0$
large so that
$$
\chi_0'\chi_0-\newconst_0\chi_0^2=b^2\chi_0'\chi_0,
$$
with $b\geq 1/2$, $\CI$, on the range of the argument of $\chi_0$.

In fact, we also need to regularize, namely introduce
\begin{equation}\label{eq:trap-commutant-regularize}
a_\ep=(1+\ep\tilde\rho^{-1})^{-2} a,\ \ep\in[0,1],
\end{equation}
which is a symbol of order $s-(m-1)/2-2$ for $\ep>0$, and is uniformly
bounded in symbols of order $s-(m-1)/2$ as $\ep$ varies in $[0,1]$. In
order to avoid more cumbersome notation below, we ignore the
regularizer and work directly with $a$; since the regularizer
gives the same kind of contributions to the commutator as the weight
$\tilde\rho^{-s+(m-1)/2}$, these contributions can be dominated in
exactly the same way.

Then, with $p=p_0+\tilde\rho p_1$ as above, $W=\tilde\rho^{m-2}\sH_{\tilde\rho
  p_1}$, which is a smooth vector field near $\Sb^*M$ as $\tilde\rho
p_1$ is order $m-1$, noting $W\tilde\rho=\tilde\alpha_1\tau\tilde\rho$
similarly to \eqref{eq:V-tilde-rho}, and $W\tau=\alpha_{\pa,1}\tau$ by
the tangency of $W$ to $\tau=0$,
\begin{equation}\begin{split}\label{eq:Ham-deriv-b-normiso}
\frac{1}{4}\sH_p(a^2)=&-
(-\hat\rho_+/2+c_-^2\phi_-^2+\nu_-\phi_-\hat p_0-\tilde\rho\phi_+(W\phi_+)-\tilde\rho\newconst\alpha_{\pa,1}\tau+\tilde\rho\phi_-(W\phi_-))\\
&\qquad\qquad\qquad\times \tilde\rho^{-2s} (\chi_0\chi_0')(\rho_+-\phi_-^2+\kappa)
\chi(\rho_+)^2\psi(\tilde\rho^m p)^2\\
&+\frac{1}{4}(-2s+m-1)\tilde\rho^{-2s}(\tilde\alpha+\tilde\rho\tilde\alpha_1)\tau
\chi_0(\rho_+-\phi_-^2+\kappa)^2 \chi(\rho_+)^2\psi(\tilde\rho^m p)^2\\
&+\frac{1}{2}\tilde\rho^{-2s}(\hat\rho_++\tilde\rho
W\rho_+)(\chi'\chi)(\rho_+)\chi_0(\rho_+-\phi_-^2+\kappa)^2\psi(\tilde\rho^m
p)^2\\
&+\frac{m}{2}(\tilde\alpha+\tilde\rho\tilde\alpha_1)\tilde\rho^{-2s} (\tilde\rho^m p)\tau
\chi_0(\rho_+-\phi_-^2+\kappa)^2 \chi(\rho_+)^2(\psi\psi')(\tilde\rho^m p).
\end{split}\end{equation}
A key point is that the second term on the right hand side, given by the weight
$\tilde\rho^{-2s+m-1}$ being differentiated, can be absorbed into the first by making
$\digamma>0$ large so that $\hat\rho_+\chi_0'(\rho_+-\phi_-^2+\kappa)$
dominates
$$
|-2s+m-1||\tilde\alpha|\tau \chi_0(\rho_+-\phi_-^2+\kappa)
$$
on $\supp a$, which can be arranged as $|-2s+m-1||\tilde\alpha|\tau$
is bounded by a sufficiently large multiple of $\hat\rho_+$ there.
Thus,
\begin{equation}\label{eq:Ham-deriv-b-normiso-squares}
\frac{1}{4}\sH_p(a^2)=-c_+^2a_+^2-c_-^2a_-^2-a_\pa^2+2g_+a_++2g_-a_-+e+\tilde
e+2a_+j_+ p+2a_-j_-p
\end{equation}
with
\begin{align*}
&a_\pm=\tilde\rho^{-s}\phi_\pm \sqrt{(\chi_0\chi_0')
  (\rho_+-\phi_-^2+\kappa)}\chi(\rho_+)\psi(\tilde\rho^m p),\\
&a_\pa=\tilde\rho^{-s}\tau^{1/2}\Big((\newconst
(c_\pa^2/2)-\beta_+\phi_+ -\tilde\rho \newconst a_{\pa,1})(\chi_0\chi_0')
  (\rho_+-\phi_-^2+\kappa)\\
&\qquad\qquad\qquad -\frac{1}{4}(-2s+m-1) (\tilde\alpha+\tilde\rho\tilde\alpha_1) \chi_0(\rho_+-\phi_-^2+\kappa)^2\Big)^{1/2}\chi(\rho_+)\psi(\tilde\rho^m p),\\
&g_\pm=\pm\frac{1}{2}\tilde\rho^{-s+1}((W\phi_\pm)-\nu_\pm\tilde\rho^{m-1}p_1) \sqrt{(\chi_0\chi_0')
  (\rho_+-\phi_-^2+\kappa)}\chi(\rho_+)\psi(\tilde\rho^m p),\\
&e=-\frac{1}{2}\tilde\rho^{-2s} (\hat\rho_++\hat\rho W\rho_+)\chi_1(\rho_+)^2
\chi_0(\rho_+-\phi_-^2+\kappa)^2\psi(\tilde\rho^m p)^2,\\
&\tilde e=\frac{m}{2}\tilde\rho^{-2s}(\tilde\rho^m p) (\tilde\alpha+\tilde\rho\tilde\alpha_1)\tau
\chi_0(\rho_+-\phi_-^2+\kappa)^2
\chi(\rho_+)^2(\psi\psi')(\tilde\rho^m p),\\
&j_\pm=\pm\frac{1}{2}\nu_\pm \tilde\rho^{-s+m}\sqrt{(\chi_0\chi_0') (\rho_+-\phi_-^2+\kappa)}\chi(\rho_+)\psi(\tilde\rho^m p);
\end{align*}
the square root in $a_\pa$ is
that of a non-negative quantity and is $\CI$ for $\newconst$ large (so that $\beta_+\phi_+$ can be absorbed into $\newconst (c_\pa^2/2)$) and $\digamma$ large (so that a small multiple of $\chi_0'$ can be used to dominate $\chi_0$), as discussed earlier. Moreover,
\begin{equation*}\begin{split}
&\supp e\subset\supp a,\ \supp e\cap\Gamma_+=\emptyset,\\
&\supp \tilde e\subset\supp a,\ \supp \tilde e\cap\Sigma=\emptyset.
\end{split}\end{equation*}
This gives, with the various operators being Weyl quantizations of the
corresponding lower case symbols,
\begin{equation}\begin{split}\label{eq:b-normiso-comm-def}
\frac{i}{4}[\cP,A^*A]=&-(C_+A_+)^*(C_+A_+)-(C_-A_-)^*(C_-A_-)-A_\pa^*A_\pa\\
&\qquad+G_+^*A_++A_+^*G_++G_-^*A_-+A_-^*G_-\\
&\qquad+E+\tilde E+A_+^*J_+\cP+\cP^*J_+^*A_++A_-^*J_-\cP+\cP^*J_-^*A_-+F
\end{split}\end{equation}
where now $A\in\Psib^{s-(m-1)/2}(M)$, $A_\pm,A_\pa\in\Psib^s(M)$, $G_\pm\in\Psib^{s-1}(M)$,
$E\in\Psib^{2s}(M)$, $\tilde E\in\Psib^{2s}(M)$, $J_\pm\in\Psib^{s-m}(M)$, $F\in\Psib^{2s-2}(M)$ with $\WFb'(F)\subset\supp
a$.

After this point the calculations repeat the semiclassical
argument: First using $\cP-\cP^*\in\Psib^{m-2}(M)$,
\begin{equation}\begin{split}\label{eq:pos-comm-est-trap-b}
&\|C_+A_+ u\|^2+\|C_-A_- u\|^2+\|A_\pa u\|^2\\
&\qquad\leq |\la Eu,u\ra|+|\la\tilde E u,u\ra|+|\la A\cP u,Au\ra|+2\|A_+ u\|\|G_+u\|+2\|A_-u\|\|G_- u\|\\
&\qquad\qquad\qquad+2|\la J_+\cP u,A_+u\ra|+2|\la J_-\cP u,A_-u\ra|+C_1\|\tilde F_1 u\|^2_{\Hb^{s-1}}+C_1\|u\|_{\Hb^{-N}}^2,
\end{split}\end{equation}
where we took $\tilde F_1\in\Psib^0(M)$ elliptic on $\WFb'(F)$ and with $\WFb'(\tilde
F_1)$ near $\Gamma$. Noting that $\WFb'(\tilde
E)\cap\Sigma=\emptyset$, the elliptic estimates give
$$
|\la\tilde
Eu,u\ra|\leq C\|B_1 \cP u\|_{\Hb^{s-m}}^2+C\|u\|_{\Hb^{-N}}^2
$$
if $B_1\in\Psib^0(M)$ is elliptic on $\supp \tilde e$.
Let $\Lambda\in\Psib^{(m-1)/2}(M)$ be elliptic with real principal
symbol $\lambda$, and let
$\Lambda^-\in\Psib^{-(m-1)/2}(M)$ be a parametrix for it so that
$\Lambda\Lambda^--\Id=R_0\in\Psib^{-\infty}(M)$. Then
\begin{equation*}\begin{split}
|\la A\cP u,Au\ra|&\leq|\la\Lambda^-A\cP
u,\Lambda^*Au\ra\|+|\la R_0A\cP u,Au\ra|\\
&\leq\frac{1}{2\ep}\|\Lambda^-A\cP
u\|^2_{\bnormiso^{*,0}}+\frac{\ep}{2}\|\Lambda^*Au\|^2_{\bnormiso^0}+C'\|u\|^2_{\Hb^{-N}}
\end{split}\end{equation*}
As $\Lambda^*A\in\Psib^s(M)$, for sufficiently small $\ep>0$,
$\frac{\ep}{2}\|\Lambda^*Au\|^2_{\bnormiso^0}$ can be absorbed
into\footnote{The point being that $A_+^*C_+^*C_+A_+-\ep
  A^*\Lambda Q_+^*Q_+\Lambda^*A$ has principal symbol $c_+^2a_+^2-\ep
  a^2\phi_+^2\lambda^2$ which can be written as the square of a real
  symbol for $\ep>0$ small in view of the main difference in vanishing
  factors in the two terms being that $\chi_0'$ in $a_+^2$ is replaced by $\chi_0$ in $a$,
  and thus the corresponding operator can be expressed as $\tilde C^*\tilde C$ for
  suitable $\tilde C$, modulo an element of $\Psib^{2s-2}(M)$, with
  the latter contributing to the $\Hb^{s-1}$ error term on the right
  hand side of \eqref{eq:weaker-est-b-trap}.}
$\|C_+A_+ u\|^2+\|C_-A_- u\|^2+\|A_\pa u\|^2$ plus $\|\tilde B_0\hat
P_0 u\|^2_{\Hb^s}$, and as discussed above, the latter already has the control
required for \eqref{eq:weaker-est-b-trap}. On the other hand,
taking $B_1\in\Psib^0(M)$ elliptic on $\WFb'(A)$, as
$\Lambda^-A\in\Psib^{s-m+1}(M)$,
$$
\|\Lambda^-A\cP u\|^2_{\bnormiso^{*,0}}\leq C''\|B_1\cP u\|_{\bnormiso^{*,s-m+1}}^2+C''\|u\|_{\Hb^{-N}}^2.
$$
Similarly, to deal with the $J_\pm$ terms on the right hand side of
\eqref{eq:pos-comm-est-trap-b}, one writes
\begin{equation*}\begin{split}
|\la J_\pm\cP u,A_\pm u\ra|&\leq \frac{1}{2\ep}\Big(\|B_1\cP
u\|^2_{\Hb^{s-m}}+C''\|u\|_{\Hb^{-N}}^2\Big)+\frac{\ep}{2}\|A_\pm u\|^2_{L^2}\\
&\leq \frac{1}{2\ep}\Big(\|B_1\cP
u\|^2_{\bnormiso^{*,s-m}}+C''\|u\|_{\Hb^{-N}}^2\Big)+\frac{\ep}{2}\|A_\pm u\|^2_{L^2},
\end{split}\end{equation*}
while the $G_\pm$ terms can be estimated by
$$
\ep\|A_+ u\|^2+\ep^{-1}\|G_+u\|^2+\ep\|A_- u\|^2+\ep^{-1}\|G_-u\|^2,
$$
and for $\ep>0$ sufficiently small, the $\|A_\pm u\|^2$ terms in both cases can
be absorbed into the left hand side of
\eqref{eq:pos-comm-est-trap-b} while the $G_\pm$ into the error
term. This gives, with $\tilde F_2$ having properties as $\tilde F_1$,
\begin{equation*}\begin{split}
&\|C_+A_+ u\|^2+\|C_-A_- u\|^2+\|A_\pa u\|^2\\
&\qquad\leq |\la Eu,u\ra|+C\|B_1\cP u\|_{\bnormiso^{*,s-m+1}}^2+C_2\|\tilde F_2 u\|_{\Hb^{s-1}}^2 +C_2\|u\|_{\Hb^{-N}}^2.
\end{split}\end{equation*}
By the remark before the statement of the theorem, if
$B_0\in\Psib^0(M)$ is such that
$\chi_0(\rho_+-\phi_-^2+\kappa)\chi(\rho_+)\psi(p)>0$ on $\WFb'(B_0)$,
$\|B_0 u\|^2_{\Hb^{s-1/2}}$ can be added to the left hand side at the cost
of changing the constant in front of $\|\tilde F_2 u\|_{\Hb^{s-1}}^2
+\|u\|_{\Hb^{-N}}^2$
on the
right hand side. Taking such $B_0\in\Psib^0(M)$,
and $B_1$ elliptic on $\WFb'(A)$ as before, $B_2\in\Psib^0(M)$ elliptic on $\WFb'(E)$ but
with $\WFb'(B_2)$ disjoint from $\Gamma_+$, we conclude that
$$
\|B_0 u\|_{\bnormiso^s}^2\leq C\|B_1\cP u\|_{\bnormiso^{*,s-m+1}}^2+C\|B_2 u\|^2_{\Hb^s}+C\|\tilde F_2 u\|^2_{\Hb^{s-1}}+C\|u\|_{\Hb^{-N}}^2,
$$
proving \eqref{eq:weaker-est-b-trap}, up to redefining $B_j$ by
multiplication by a positive constant. Recall that unless one makes
sufficient a priori assumptions on the
regularity of $u$, one actually needs to regularize, but as mentioned
after \eqref{eq:trap-commutant-regularize}, the regularizer is handled
in exactly the same manner as the weight.

Now in general, with $\chi$ as before, but supported in $[0,1]$ instead of $[0,R]$, let $\chi_R=\chi(\cdot/R)$ and write $a=a_{R,\kappa}$ to emphasize its dependence on these quantities. When $R$ and $\kappa$ are decreased, $\supp a_{R,\kappa}$ also decreases in $\Sigma$ in the strong sense that $0<R<R'$ and $0<\kappa<\kappa'$ imply that $a_{R',\kappa'}$ is elliptic on $\supp a_{R,\kappa}$ within $\Sigma$, and indeed globally if the cutoff $\psi$ is suitably adjusted as well. Thus, if $u\in\Hb^{-N}$, say, one uses first \eqref{eq:weaker-est-b-trap} with $s=-N+1$, and with $B_j$ given by the proof above, so the $B_3 u$ term is a priori bounded, to conclude that $B_0 u\in\bnormiso^s$ and the estimate holds, so in particular, $u$ is in $\Hb^{-N+1/2}$ microlocally near $\Gamma$ (concretely, on the elliptic set of $B_0$). Now one decreases $\kappa$ and $R$ by an arbitrarily small amount and applies \eqref{eq:weaker-est-b-trap} with $s=-N+3/2$; the $B_3 u$ term is now a priori bounded by the microlocal membership of $u$ in $\Hb^{-N+1/2}$, and one concludes that $B_0 u\in\bnormiso^{-N+3/2}$, so in particular $u$ is microlocally in $\Hb^{-N+1}$. Proceeding inductively, one deduces the first statement of the theorem, \eqref{eq:b-normiso}.

If one reverses the role of $\Gamma_+$ and $\Gamma_-$ in the statement
of the theorem, one simply reverses the roles of
$\rho_+=\phi_+^2+\newconst\tau$ and $\phi_-^2$ in the definition of
$a$ in \eqref{eq:a-def-normiso}. This reverses the signs of all terms on the right hand side of \eqref{eq:Ham-deriv-b-normiso}
 whose sign mattered below, and thus the signs of the first three
 terms on the right hand side of \eqref{eq:b-normiso-comm-def}, which
 then does not affect the rest of the argument.

In order to prove \eqref{eq:b-normiso-offspect}, one simply adds a
factor $\tau^{-2r}$ to the definition of $a$ in
\eqref{eq:a-def-normiso}. This adds a factor $\tau^{-2r}$ to every
term on the right hand side of \eqref{eq:b-normiso-comm-def}, as well as an additional term
$$
\frac{r}{2}\tau^{-2r}\tilde\rho^{-2s}c_\pa^2 \chi_0(\rho_+-\phi_-^2+\kappa)^2 \chi(\rho_+)^2\psi(p)^2,
$$
which for $r<0$ has the same sign as the terms whose sign was used
above, and indeed can be written as the negative of a square. Thus
\eqref{eq:Ham-deriv-b-normiso-squares} becomes
\begin{equation}\begin{split}\label{eq:Ham-deriv-b-normiso-squares-2}
\frac{1}{4}\sH_p(a^2)=&-c_+^2a_+^2-c_-^2a_-^2-a_\pa^2-a_r^2\\
&\qquad+2g_+a_++2g_-a_-+e+\tilde e+2j_+a_+p+2j_-a_-p
\end{split}\end{equation}
with
$$
a_r=\sqrt{\frac{-r}{2}}\tau^{-r}\tilde\rho^{-s}c_\pa\chi_0(\rho_+-\phi_-^2+\kappa) \chi(\rho_+)\psi(p),
$$
and all other terms as above apart from the additional factor of
$\tau^{-r}$ in the definition of $a_\pm$, etc. Since $a_r$ is actually
elliptic at $\Gamma$ when $r\neq 0$, this proves the desired estimate
(and one does not need to use the improved properties given by the
Weyl calculus!).

When the role of $\Gamma_+$ and $\Gamma_-$ is reversed, there is an
overall sign change, and thus $r>0$ gives the advantageous sign; the
rest of the argument is unchanged.
\end{proof}

\begin{rmk}
  As in the semiclassical setting, see Remark~\ref{RmkEll}, the estimate \eqref{eq:b-normiso} can be strengthened by adding the term $\|B_0\hat P_0 u\|_{\Hb^{s+1}}$ to the left hand side, which is controlled by elliptic regularity, likewise for \eqref{eq:b-normiso-offspect}--\eqref{eq:b-normiso-offspect-reverse}. A more natural way of phrasing such an improvement is to use `coisotropic, normally isotropic' spaces $\tbnormiso^s$ and $\tbnormiso^{*,s}$ in the estimate \eqref{eq:b-normiso}, where the squared norm on $\tbnormiso^s$ is defined by
  \[
    \|u\|_{\tbnormiso^s}^2=\|Q_0 u\|^2_{\Hb^s}+\|Q_+u\|^2_{\Hb^s}+\|Q_-u\|^2_{\Hb^s}+\|\tau^{1/2} u\|^2_{\Hb^s}+\|\hat P_0 u\|^2_{\Hb^{s+1/2}}+\|u\|^2_{\Hb^{s-1/2}},
  \]
  i.e.\ strengthening the norm of $\hat P_0 u$ by a half, which strengthens the space and weakens its dual. To obtain the necessary elliptic estimate \eqref{EqEllEst} with the strengthened norms on the terms involving $\tilde B_0$, but keeping the norm on $\tilde B_3 u$ (which is required for the iterative argument at the end of the proof), one can choose $\tilde B_0$ with $\WFb'(I-\tilde B_0)\cap\Gamma=\emptyset$ so that $\tilde B_3$ can be chosen to be microsupported away from $\Gamma$, and thus $\|\tilde B_3 u\|_{\Hb^{s-1/2}}\leq C\|\tilde B_3 u\|_{\bnormiso^{s-1/2}}$ is controlled using \eqref{eq:b-normiso}, with the norm on $B_1 \cP u$ being $\|B_1\cP u\|_{\bnormiso^{*,s-m+1/2}}\leq C\|B_1\cP u\|_{\tbnormiso^{*,s-m+1}}$, and the error term being measured in $\Hb^{s-3/2}\supset\Hb^{s-1}$, as required.
\end{rmk}

\def\cprime{$'$}

\end{document}